\numberwithin{equation}{section}
\newtheorem{theorem}{Theorem}
\newtheorem{proposition}[theorem]{Proposition}
\newtheorem{corollary}[theorem]{Corollary}
\newtheorem{example}[theorem]{Example}
\DeclareMathOperator*{\divergenz}{div}
\DeclareMathOperator*{\essinf}{ess ~inf}
\newcommand{\N}{\mathbb{N}}
\newcommand{\R}{\mathbb{R}}
\newcommand{\RN}{\mathbb{R}^N}
\newcommand{\Wpzero}[1]{W^{1,#1}_0(\Omega)}
\newcommand{\rand}{\partial\Omega}
\newcommand{\into}{\int_{\Omega}}
\newcommand{\s}{\cdot}
\newcommand{\close}{\overline{\Omega}}
\numberwithin{theorem}{section}
\numberwithin{equation}{section}
\begin{document}
\title[Concave-convex and parametric $(p,q)$-equations]{On a Dirichlet problem with $(p,q)$-Laplacian and parametric concave-convex nonlinearity}
\subjclass[2010]{35J20, 35J60}
\keywords{$(p,q)$-Laplacian, concave-convex nonlinearity, positive solution, bifurcation-type theorem}
\begin{abstract}
A homogeneous Dirichlet problem with $(p,q)$-Laplace differential operator and reaction given by a parametric $p$-convex term plus a $q$-concave one is investigated.  A bifurcation-type result, describing changes in the set of positive solutions as the parameter $\lambda>0$ varies, is proven. Since for every admissible $\lambda$ the problem has a smallest positive solution $\bar u_{\lambda}$, both monotonicity and continuity of the map $ \lambda \mapsto \bar u_{\lambda}$ are studied. 
\end{abstract}
\author{Salvatore A. Marano}
\address{Salvatore A. Marano\\
Dipartimento di Matematica e Informatica\\
Universit\`{a} degli Studi di Catania \
Viale A. Doria 6, 95125 Catania, Italy}
\email{marano@dmi.unict.it}
\author{Greta Marino}
\address{Greta Marino\\
Dipartimento di Matematica e Informatica \\
Universit\`a degli Studi di Catania \\
Viale A. Doria 6, 95125 Catania, Italy}
\email{greta.marino@dmi.unict.it}
\author{Nikolaos S. Papageorgiou}
\address{Nikolaos S. Papageorgiou\\
Department of Mathematics\\
National Technical University\\
Zografou Campus, 15780 Athens, Greece}
\email{npapg@math.ntua.gr}
\maketitle
\section{Introduction}
Let $\Omega$ be a bounded domain in $\R^N$ with a $C^2$-boundary $\rand$, let $1<\tau< q< p<+\infty$, and let $f:\Omega\times\R\to\R$ be a Carath\'{e}odory function. Consider the Dirichlet problem
\begin{equation}\label{problem}
\tag{${\rm P}_{\lambda}$}
\left\{
\begin{array}{ll}
-\Delta_p u- \Delta_q u= u^{\tau-1}+ \lambda f(x, u) & \mbox{in $\Omega$,}\cr
u>0 & \mbox{in $\Omega$,}\cr
u=0 & \mbox{on $\rand$,}\cr
\end{array}
\right.
\end{equation}
where $\lambda>0$ is a parameter while $\Delta_r$, $r>1$, denotes the $r$-Laplacian, namely 
$$\Delta_r u:= \divergenz(\vert\nabla u\vert^{r-2}\nabla u)\quad\forall\, u\in\Wpzero{r}.$$
The nonhomogeneous differential operator $Au:=\Delta_p u+\Delta _q u$ that drives \eqref{problem} is usually called $(p,q)$-Laplacian. It stems from a wide range of important applications, including models of elementary particles \cite{De}, biophysics \cite{Fi}, plasma physics \cite{W}, reaction-diffusion equations \cite{CI}, elasticity theory \cite{Z}, etc. That's why the relevant literature looks daily increasing and numerous meaningful works on this subject are by now available; see the survey paper \cite{MM} for a larger bibliography.

Since $\tau<q< p$, the function $\xi\mapsto \xi^{\tau-1}$ grows $(q-1)$-sublinearly at $+\infty$, whereas $\xi\mapsto f(x,\xi)$ is assumed to be $(p-1)$-superlinear near $+\infty$, although it need not satisfy the usual (in such cases) Ambrosetti-Rabinowitz condition. So, the reaction in \eqref{problem} exhibits the competing effects of concave and convex terms, with the latter multiplied by a positive parameter.

The aim of this paper is to investigate how the solution set of \eqref{problem} changes as $\lambda$ varies. In particular, we prove that there exists a critical parameter value $\lambda^*>0$ for which problem \eqref{problem} admits
\begin{itemize}
\item at least  two solutions if $\lambda\in(0,\lambda^*)$,
\item at least one solution when $\lambda=\lambda^*$, and
\item no solution provided $\lambda>\lambda^* $.
\end{itemize}
Moreover, we detect a smallest positive solution $\bar u_{\lambda}$ for each $\lambda\in(0,\lambda^*]$ and show that the map $\lambda\mapsto\bar u_{\lambda}$ turns out left-continuous, besides increasing. 

The first bifurcation result for semilinear Dirichlet problems driven by the Laplace operator was established, more than twenty years ago, in the seminal paper\cite{ABC} and then extended to the $p$-Laplacian in \cite{GMP,GZ}. These works treat the reaction
$$\xi\mapsto\lambda\xi^{s-1}+\xi^{r-1},\quad\xi\geq 0,$$
where $1< s< p< r< p^* $, $\lambda>0$, and $p^*$ denotes the critical Sobolev exponent. A wider class of nonlinearities has recently been investigated in \cite{MP}, while \cite{PRR} deals with Robin boundary conditions. It should be noted that, unlike our case, $\lambda$ always multiplies the concave term, which changes the analysis of the problem.  Finally, \cite{BLP,GP,PR} contain analogous bifurcation theorems for problems of a different kind, whereas \cite{MMP,MMP1} study $(p,q)$-Laplace equations having merely concave right-hand side.

Our approach is based on the critical point theory, combined with appropriate truncation and comparison techniques. 
\section{Mathematical background and hypotheses}\label{prel}
Let $(X,\Vert\cdot\Vert)$ be a real Banach space. Given a set $V\subseteq X$, write $\overline{V}$ for the closure of $V$, $\partial V$ for the boundary of $V$, and ${\rm int}_X(V)$ or simply ${\rm int}(V)$, when no confusion can arise, for the interior of $V$. If $x\in X$ and $\delta>0$ then
$$B_\delta(x):=\{ z\in X:\;\Vert z-x\Vert<\delta\}\, ,\quad B_\delta:=B_\delta(0)\, .$$
The symbol $(X^*,\Vert\cdot \Vert_{X^*})$ denotes the dual space of $X$, $\langle\s \,,\s\rangle$ indicates the duality pairing between $X$ and $X^*$, while $x_n\to x$ (respectively, $x_n\rightharpoonup x$) in $X$ means `the sequence $\{x_n\}$ converges strongly (respectively, weakly) in $X$'. We say that $A:X\to X^*$ is of type $({\rm S})_+$ provided
$$x_n\rightharpoonup x\;\mbox{ in }\; X,\quad
\limsup_{n\to+\infty}\langle A(x_n),x_n-x\rangle\leq 0\quad\implies\quad x_n\to x.$$
The function $\Phi:X\to\mathbb{R}$ is called coercive if $\displaystyle{\lim_{\Vert x\Vert\to+\infty}}\Phi(x)=+\infty$ and weakly sequentially lower semicontinuous when
$$x_n\rightharpoonup x\;\mbox{ in }\; X\quad\implies\quad\Phi(x)\leq\liminf_{n\to\infty}\Phi(x_n).$$
Suppose $\Phi\in C^1(X)$. We denote  by $K(\Phi)$ the critical set of $\Phi$, i.e., 
$$K(\Phi):=\{x\in X:\,\Phi'(x)=0\}.$$
The classical Cerami compactness condition for $\Phi$ reads as follows:
\begin{itemize}
\item[$({\rm C})$] {\it Every $\{x_n\}\subseteq X$ such that $\{\Phi(x_n)\}$ is bounded and  $(1+\Vert x_n\Vert)\Phi'(x_n)\to 0$ in $X^*$ has a convergent subsequence.}
\end{itemize}
From now on, $\Omega$ indicates a fixed bounded domain in $\RN$ with a $C^2$-boundary $\partial\Omega$. Let $u,v:\Omega\to
\R$ be measurable and let $t\in\R$. The symbol $u\leq v$ means $u(x)\leq v(x)$ for almost every $x\in\Omega$, $t^\pm:=\max\{\pm t,0\}$, $u^\pm(\cdot):= u(\cdot)^\pm$. If $u,v$ belong to a function space, say $Y$, then we set
$$[u,v]:=\{ w\in Y:u\leq w\leq v\}\, ,\quad [u):=\{w\in Y:u\leq w\}\, .$$
The conjugate exponent $r'$ of a number $r\geq 1$ is defined by $r':=r/(r-1)$, while $r^*$ indicates its Sobolev conjugate, namely
\begin{equation*}
r^*:=
\begin{cases}
\frac{Nr}{N-r}&\text{when $r<N$},\\
+\infty&\text{otherwise}.
\end{cases}
\end{equation*}
As usual,
$$\Vert u\Vert_r:= \left(\int_{\Omega} |u|^r\, dx\right)^{1/r}\;\forall\, u\in L^{r}(\Omega),\quad
\|u\|_{1,r}:=\left(\int_{\Omega} |\nabla u|^r\, dx\right)^{1/r}\;\forall\, u\in W^{1,r}_0(\Omega),$$
and $W^{-1,r'}(\Omega)$ denotes the dual space of $W^{1,r}_0(\Omega)$. We will also employ the linear space $C^1_0(\overline{\Omega}):=\{u\in C^1(\overline{\Omega}): u\lfloor_{\partial\Omega}=0\}$, which is complete with respect to the standard $C^1(\overline{\Omega})$-norm. Its positive cone
$$C_+:=\{u\in C^1_0(\overline{\Omega}):u(x)\geq 0\text{ in $\overline{\Omega}$}\}$$
has a nonempty interior given by
\[
{\rm int } (C_+)=\left\{u\in C_+: u(x)>0\;\;\forall\, x\in\Omega,\;\frac{\partial u}{\partial n}(x)<0\;\;\forall\, x\in\partial\Omega
\right\}.
\]
Here $n(x)$ denotes the outward unit normal to $\partial\Omega$ at $x$.

Let $A_r:W^{1,r}_0(\Omega)\to W^{-1,r'}(\Omega)$ be the nonlinear operator stemming from the negative $r$-Laplacian, i.e.,
$$\langle A_r(u),v\rangle:=\int_\Omega|\nabla u|^{r-2}\nabla u\s\nabla v\, dx\, ,\quad u,v\in W^{1,r}_0(\Omega)\, .$$
We know \cite[Section 6.2]{GP1} that $A_r$ is bounded, continuous, strictly monotone, and of type $({\rm S})_+$. The Liusternik-Schnirelmann theory gives an increasing sequence $\{\lambda_{n, r}\}$ of eigenvalues for $A_r$. The following assertions can be found in \cite[Section 6.2]{GP1}.
\begin{itemize}
\item[$({\rm p}_1)$] $\lambda_{1,r}$ is positive, isolated, and simple.
\item[$({\rm p}_2)$] $\Vert u\Vert_r^r\leq\displaystyle{\frac{1}{\lambda_{1,r}}}
\Vert u\Vert_{1,r}^r$ for all $u\in W^{1,r}_0(\Omega)$.
\item[$({\rm p}_3)$]  $\lambda_{1,r}$ admits an eigenfunction $\phi_{1,r}\in{\rm int}(C_+)$ such that $\Vert\phi_{1,r}\Vert_r=1$.
\end{itemize}
Proposition 13 of \cite{BoTa} then ensures that
\begin{itemize}
\item[$({\rm p}_4)$] If $r\neq \hat r$ then  $\phi_{1,r}$ and $\phi_{1,\hat r}$ are linearly independent.
\end{itemize}
Let  $g:\Omega\times\R\to\R$ be a Carath\'eodory function satisfying the growth condition
$$\vert g(x, t)\vert\le a(x) \left(1+\vert t\vert^{s-1}\right)\;\;\text{in}\;\;\Omega\times\R,$$
where $a\in L^{\infty}(\R)$, $ 1< s\le p^*$. Set $G(x, \xi):= \int_0^\xi g(x, t)\, dt$ and consider the $C^1$-functional
$\varphi:\Wpzero{p}\to\R$ defined by
\[
\varphi(u):=\frac{1}{p}\Vert\nabla u\Vert_p^p+ \frac{1}{q}\Vert\nabla u\Vert_q^q- \into G(x, u(x))\, dx,\quad  u\in \Wpzero{p}.
\]
\begin{proposition}[\cite{GP-SVAN}, Proposition 2.6] 
\label{prop2}
If $u_0 \in \Wpzero{p} $ is a local $ C^1_0(\close)$-minimizer of $\varphi$ then $u_0\in C^{1,\alpha}(\close)$ for some $\alpha\in (0,1)$ and $u_0 $ turns out to be a local $ \Wpzero{p}$-minimizer of $ \varphi$.
\end{proposition}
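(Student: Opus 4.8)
The plan is to follow the classical scheme relating $C^1$- and Sobolev local minimizers, due to Garc\'ia Azorero, Manfredi and Peral, adapted to the $(p,q)$-setting. First I would record that a local $C^1_0(\close)$-minimizer is automatically a critical point of $\varphi$: testing the minimality inequality $\varphi(u_0)\le\varphi(u_0+tv)$ with an arbitrary $v\in C^1_0(\close)$ and letting $t\to0^\pm$ yields $\langle\varphi'(u_0),v\rangle=0$ on the dense subspace $C^1_0(\close)\subseteq\Wpzero{p}$, whence $\varphi'(u_0)=0$. Thus $u_0$ solves weakly $-\Delta_p u_0-\Delta_q u_0=g(\cdot,u_0)$ in $\Om$ with $u_0=0$ on $\rand$. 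The growth bound on $g$ combined with a Moser/De Giorgi iteration gives $u_0\in\Linf$, and then the nonlinear global regularity theory for the $(p,q)$-Laplacian (H\"older gradient estimates of Lieberman type) yields $u_0\in C^{1,\alpha}(\close)$ for some $\alpha\in(0,1)$. This settles the first assertion.

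For the equivalence I would argue by contradiction: assume $u_0$ is a local $C^1_0(\close)$-minimizer but not a local $\Wpzero{p}$-minimizer. Then for each $n$ the value $m_n:=\inf\{\varphi(u):\|u-u_0\|_{1,p}\le1/n\}$ is strictly below $\varphi(u_0)$. Since the ball $\{u:\|u-u_0\|_{1,p}\le1/n\}$ is bounded, closed and convex, hence weakly compact in the reflexive space $\Wpzero{p}$, and $\varphi$ is weakly sequentially lower semicontinuous on it (the convex leading part is w.s.l.s.c.\ and the reaction integral $\into G(\cdot,u)\,dx$ is weakly continuous via the Sobolev embedding), the infimum is attained at some $u_n$ with $\varphi(u_n)<\varphi(u_0)$ and $u_n\neq u_0$. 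By construction $\|u_n-u_0\|_{1,p}\le1/n$, so $u_n\to u_0$ in $\Wpzero{p}$.

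Constrained minimality produces a Lagrange multiplier $\theta_n\ge0$ with
$$A_p(u_n)+A_q(u_n)+\theta_n A_p(u_n-u_0)=g(\cdot,u_n)\quad\text{in }W^{-1,p'}(\Om),$$
so each $u_n$ weakly solves a perturbed $(p,q)$-equation. The crucial and hardest step is to establish a priori bounds $\|u_n\|_{C^{1,\beta}(\close)}\le M$ with $\beta\in(0,1)$ and $M>0$ both independent of $n$. This needs, first, a uniform $L^\infty$-estimate (Moser iteration exploiting $u_n\to u_0$ in $\Wpzero{p}$ together with the growth condition) and, second, an application of the Lieberman-type regularity to the displayed equation in which the multiplier term is controlled, for instance by absorbing the factor $(1+\theta_n)$ into the leading coefficient and verifying that the structure conditions hold with $n$-independent constants. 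Once these uniform estimates are secured, the compact embedding $C^{1,\beta}(\close)\hookrightarrow C^1_0(\close)$ forces $u_n\to u_0$ in $C^1_0(\close)$, the limit being $u_0$ by the $\Wpzero{p}$-convergence. Consequently $\|u_n-u_0\|_{C^1_0(\close)}$ is eventually smaller than the radius of $C^1$-minimality while $\varphi(u_n)<\varphi(u_0)$, contradicting that $u_0$ is a local $C^1_0(\close)$-minimizer. I expect the main obstacle to be precisely this uniform-in-$n$ regularity in the presence of the Lagrange multiplier; the remaining steps are routine.
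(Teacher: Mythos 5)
The paper does not actually prove this proposition: it is imported verbatim from \cite{GP-SVAN}, Proposition 2.6 (the Gasi\'nski--Papageorgiou extension to nonhomogeneous operators of the Brezis--Nirenberg and Garc\'{\i}a Azorero--Manfredi--Peral ``$C^1$ versus Sobolev minimizers'' results), so there is no internal argument to compare against. Your outline reproduces the scheme of that reference: critical-point property plus nonlinear regularity for the first assertion, then the contradiction argument with minimization over shrinking $\Wpzero{p}$-balls, a Lagrange multiplier, uniform $C^{1,\beta}(\close)$ bounds, and the compact embedding into $C^1_0(\close)$. Two points, however, are genuinely missing. First, the statement allows critical growth $s=p^*$, and in that case the map $u\mapsto\into G(x,u)\,dx$ is \emph{not} weakly continuous (the embedding $\Wpzero{p}\hookrightarrow L^{p^*}(\Omega)$ is not compact), so your weak-compactness argument does not give attainment of $m_n$; one must either restrict to $s<p^*$ (which suffices for every use made of the proposition in this paper, since $r<p^*$) or invoke the Brezis--Nirenberg truncation device for the critical part. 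Second, the uniform-in-$n$ regularity estimate, which you rightly single out as the crux, is left unexecuted, and the dangerous case is $\theta_n\to+\infty$: after dividing the Euler--Lagrange identity by $1+\theta_n$ it reads $-\operatorname{div}a_n(x,\nabla u_n)=\frac{1}{1+\theta_n}\,g(x,u_n)$ with
\[
a_n(x,\xi):=\frac{1}{1+\theta_n}\left(|\xi|^{p-2}\xi+|\xi|^{q-2}\xi\right)+\frac{\theta_n}{1+\theta_n}\,|\xi-\nabla u_0(x)|^{p-2}\bigl(\xi-\nabla u_0(x)\bigr),
\]
and one must verify that Lieberman's structure conditions hold for $a_n$ with constants depending only on $p$, $q$, and $\Vert u_0\Vert_{C^{1,\alpha}(\close)}$ (available thanks to the first assertion), uniformly over $\theta_n\in[0,+\infty)$, together with a uniform $L^\infty$ bound on $u_n$. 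That verification is the actual content of the proof in \cite{GP-SVAN}; as written, your argument is a correct plan rather than a complete proof.
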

Combining this result with the strong comparison principle below, essentially due to Arcoya-Ruiz \cite{AR}, shows that certain constrained minimizers actually are `global' critical points. Recall that, given $ h_1,h_2\in L^{\infty}(\Omega),$ 
$$h_1\prec h_2\;\iff\;\essinf_{K}\,(h_2- h_1)>0\;\; \text{for any nonempty compact set}\; K\subseteq\Omega.$$
\begin{proposition}\label{arcoya-ruiz}
Let $a\in\R_+$, $h_1,h_2\in L^{\infty}(\Omega)$, $u_1\in C^1_0(\close)$, $u_2\in{\rm int}(C_+)$. Suppose $h_1\prec h_2$ as well as
\[
-\Delta_p u_i-\Delta_q u_i+a\vert u_i\vert^{p-2} u_i= h_i\;\;\text{in}\;\;\Omega,\;\; i=1,2.
\]
Then, $u_2-u_1\in{\rm int}(C_+)$. 
\end{proposition}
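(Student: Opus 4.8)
The plan is to first establish the weak comparison $u_1\le u_2$ and then to upgrade it, by means of a strong maximum principle, to the strict conclusion $u_2-u_1\in{\rm int}(C_+)$; the strict ordering $h_1\prec h_2$ is exactly what turns ``$\le$'' into membership in the open cone.

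For the weak comparison, write $B(u):=-\Delta_p u-\Delta_q u+a\vert u\vert^{p-2}u$, so that the hypotheses read $B(u_1)=h_1$ and $B(u_2)=h_2$ in $W^{-1,p'}(\Omega)$. Subtracting gives $\langle B(u_2)-B(u_1),v\rangle=\into(h_2-h_1)v\,dx$ for every admissible $v$. Testing with $v=(u_1-u_2)^+\ge 0$ and using that $A_p$, $A_q$ are monotone while $t\mapsto a\vert t\vert^{p-2}t$ is nondecreasing, the left-hand side is $\le 0$; since $h_1\prec h_2$ forces $h_1\le h_2$ a.e., the right-hand side is $\ge 0$. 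Hence both vanish, strict monotonicity of $\xi\mapsto\vert\xi\vert^{r-2}\xi$ yields $\nabla(u_1-u_2)^+=0$ a.e., and as $(u_1-u_2)^+\in\Wpzero{p}$ this gives $(u_1-u_2)^+\equiv 0$, i.e. $u_1\le u_2$.

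Next set $w:=u_2-u_1\ge 0$, which lies in $C^1_0(\close)$ because both $u_i$ do (indeed $u_i\in C^{1,\alpha}(\close)$ by nonlinear regularity). Applying the fundamental theorem of calculus to the vector field $\xi\mapsto\vert\xi\vert^{p-2}\xi+\vert\xi\vert^{q-2}\xi$ along the segment joining $\nabla u_1$ to $\nabla u_2$, and likewise to $t\mapsto a\vert t\vert^{p-2}t$, one rewrites the subtracted equation $B(u_2)-B(u_1)=h_2-h_1$ as a single linear, divergence-form equation
\[
-\divergenz\!\left(M(x)\nabla w\right)+c(x)\,w=h_2-h_1\quad\text{in}\;\Omega,
\]
where $M(x)$ is symmetric and positive definite wherever $\vert\nabla u_1\vert+\vert\nabla u_2\vert>0$, and $c\ge 0$. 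Because $h_1\prec h_2$, the source $h_2-h_1$ is bounded below by a positive constant on every compact $K\subseteq\Omega$. One then invokes the strong maximum principle together with the Hopf boundary point lemma for this operator: the nonnegative supersolution $w$ cannot vanish at an interior point (otherwise $w\equiv 0$, which contradicts a strictly positive source), so $w>0$ in $\Omega$, and the boundary lemma gives $\partial w/\partial n<0$ on $\rand$; this is precisely $w\in{\rm int}(C_+)$.

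The main obstacle is that $M(x)$ degenerates on the common critical set $\{\nabla u_1=\nabla u_2=0\}$, where uniform ellipticity is lost and the classical linear strong maximum principle does not apply verbatim; moreover, for $p<2$ the coefficient $c$ need not be bounded. Away from critical points the operator is uniformly elliptic and the standard strong maximum principle and Hopf lemma apply directly. On the critical set one argues as in Arcoya--Ruiz \cite{AR}, exploiting the strict positivity of $h_2-h_1$ on compacta to exclude interior tangency of $u_1$ and $u_2$, as well as tangency at $\rand$; carrying this degenerate comparison through, in the present $(p,q)$-setting with the lower-order term $a\vert u\vert^{p-2}u$, is the technical heart of the argument, while the remaining steps are routine.
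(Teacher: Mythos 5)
The paper does not actually prove this proposition: it is stated as a known result, ``essentially due to Arcoya--Ruiz \cite{AR}'', with no argument supplied. So the only comparison available is between your sketch and the cited literature. Your first step, the weak comparison $u_1\le u_2$, is complete and correct: testing the subtracted equation with $(u_1-u_2)^+$, using the pointwise monotonicity of $\xi\mapsto|\xi|^{r-2}\xi$ for $r=p,q$ and of $t\mapsto a|t|^{p-2}t$, and observing that $h_1\prec h_2$ yields $h_1\le h_2$ a.e.\ (exhaust $\Omega$ by compacta), forces the nonnegative integrand to vanish, hence $\nabla(u_1-u_2)^+=0$ a.e.\ and $(u_1-u_2)^+=0$ by Poincar\'e. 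This part needs no sign assumption on $u_1$ and is fine as written.

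The second step is where the proof is not yet a proof. The linearization $-\divergenz\l(M(x)\nabla w\r)+c(x)w=h_2-h_1$ is the right device, and you correctly identify the obstruction (degeneracy or blow-up of $M$ and $c$ where $\nabla u_1=\nabla u_2=0$, i.e.\ exactly where the classical strong maximum principle and Hopf lemma are unavailable), but you then resolve it by ``arguing as in Arcoya--Ruiz''. Two caveats. First, \cite{AR} treats the pure $p$-Laplacian; the content of the proposition as stated is precisely its validity for the nonhomogeneous sum $\Delta_p+\Delta_q$ (whose linearization has eigenvalues of mixed order $|\xi|^{p-2}+|\xi|^{q-2}$) together with the zeroth-order term $a|u|^{p-2}u$, so the deferral is to a reference that does not literally cover the operator at hand. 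Second, the role of $h_1\prec h_2$ is not merely that ``a strictly positive source contradicts $w\equiv 0$'': the genuine danger is that $w\ge 0$, $w\not\equiv 0$, yet $w$ vanishes at an interior point or has vanishing normal derivative at the boundary because both gradients vanish there; it is the quantitative bound $\essinf_K(h_2-h_1)>0$ that is used to exclude such tangencies, and that computation is the entire substance of the result. You flag this as the ``technical heart'' and then omit it, so as a self-contained argument there is a gap at exactly that point; to close it you would need to reproduce the Arcoya--Ruiz tangency analysis for the $(p,q)$-operator or invoke a strong comparison principle already formulated for nonhomogeneous operators of this class. In fairness, the paper leaves the same work to the literature.
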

Throughout the paper, `for every $x\in\Omega$' will take the place of `for almost every $x\in\Omega$', $c_0,c_1,\ldots$ indicate suitable positive constants, $f:\Omega\times\R\to\R$ is a Carath\'eodory function such that $ f(\cdot \,, t)=0$ provided $t\leq 0$, while $F(x,\xi):=\int_0^\xi f(x,t)\, dt$.

The following hypotheses will be posited.

\begin{itemize}
\item[$({\rm h}_1)$] There exist $\theta\in[\tau, q]$ and $r \in (p, p^*) $ such that
$$c_1 t^{p-1}+c_2 t^{q-1}\le f(x, t) \le c_0 \left( t^{\theta-1}+ t^{r-1}\right) \;\;\forall\, (x,t)\in\Omega\times\R_+\, ,$$
where $c_2>\lambda_{1,q}$.
\item[$({\rm h}_2)$] $\lim\limits_{\xi\to+\infty}\frac{F(x, \xi)}{\xi^p}=+\infty$ uniformly with respect to $x\in\Omega$.
\item[$({\rm h}_3)$] $\liminf\limits_{\xi\to+\infty}\frac{f(x, \xi)\xi- pF(x, \xi)}{\xi^{\beta}}\geq c_3$ uniformly in $x\in\Omega$. Here, $\beta>\tau$ and
$$(r-p) \max\left\{Np^{-1},1\right\}<\beta< p^*.$$
\item[$({\rm h}_4)$] To every $\rho>0$ there corresponds $\mu_{\rho}> 0$ such that $t\mapsto f(x,t)+\mu_{\rho} t^{p-1} $ is nondecreasing in $[0, \rho]$ for any $x\in\Omega$. 
\end{itemize}
By $({\rm h}_2)$--$({\rm h}_3)$ the perturbation $f(x,\s)$ is $(p-1)$-superlinear at $+\infty$. In the literature, one usually treats this case via the well-known Ambrosetti-Rabinowitz condition, namely:
\begin{itemize}
\item[({\rm AR})] With appropriate $ M> 0$, $ \sigma> p$ one has both $\essinf\limits_{\Omega} F(\s \,, M)>0$ and 
\begin{equation}\label{AR}
0< \sigma F(x,\xi)\le f(x,\xi)\xi\, ,\quad (x,\xi)\in\Omega\times [M,+\infty).
\end{equation}
\end{itemize}
It easily entails $c_3\xi^\sigma\le F(x,\xi)$ in $\Omega\times [M,+\infty)$, which forces $({\rm h}_2)$. However, nonlinearities having a growth rate `slower' than $t^{\sigma-1}$ at $+\infty$ are excluded from \eqref{AR}. Thus, assumption $({\rm h}_3)$ incorporates in our framework more situations. 
\begin{example}
Let $c_2>\lambda_{1,q}$. The functions $f_1,f_2:\R_+\to\R$ defined by
\[
f_1(t):=
\begin{cases}
t^{p-1}+c_2 t^{\tau-1} \quad & \text{if }\; 0 \le t \le 1,\\
t^{r-1}+c_2 t^{q-1} & \text{otherwise}, 
\end{cases}
\quad
f_2(t):= t^{p-1}\log(1+t)+c_2 t^{q-1},\quad t\in\R_+, 
\]
satisfy $({\rm h}_1)$--$({\rm h}_4)$. Nevertheless, $f_1$ alone complies with condition (AR). 
\end{example}
\section{A bifurcation-type theorem}
Write $S_\lambda$ for the set of positive solutions to \eqref{problem}. Lieberman's nonlinear regularity theory \cite[p. 320]{L} and Pucci-Serrin's maximum principle \cite[pp. 111,120]{PS} yield
$$S_{\lambda}\subseteq{\rm int}(C_+).$$
Put $\mathcal{L}:=\{\lambda>0:S_\lambda\neq\emptyset\}$. Our first goal is to establish some basic properties of $\mathcal L$. From now on, $X:=\Wpzero{p}$ and $\Vert\cdot\Vert:=\Vert\cdot\Vert_{1,p}$.
\begin{proposition}\label{prop5}
Under $({\rm h}_1)$ one has $\mathcal L\ne\emptyset$.
\end{proposition}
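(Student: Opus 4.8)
The plan is to prove the stronger assertion that every sufficiently small $\lambda>0$ belongs to $\mathcal L$. Fix $\rho>0$ (to be chosen later) and freeze the reaction above the level $\rho$ by setting, for $(x,t)\in\Omega\times\R$,
\[
\hat f_\lambda(x,t):=\left([t]_\rho\right)^{\tau-1}+\lambda f(x,[t]_\rho),\qquad [t]_\rho:=\min\{t^+,\rho\},
\]
so that $\hat f_\lambda(x,t)=0$ for $t\le0$ by the standing sign convention on $f$. Its primitive $\hat F_\lambda(x,\xi):=\int_0^\xi\hat f_\lambda(x,t)\,dt$ grows at most linearly in $\xi$, because $\hat f_\lambda(x,\cdot)$ is bounded by the constant $\rho^{\tau-1}+\lambda f(x,\rho)$, which is uniformly controlled through $({\rm h}_1)$. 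Hence the associated energy functional
\[
\hat\varphi_\lambda(u):=\frac1p\|\nabla u\|_p^p+\frac1q\|\nabla u\|_q^q-\into\hat F_\lambda(x,u)\,dx,\qquad u\in X,
\]
lies in $C^1(X)$, is coercive (the term $\frac1p\|u\|^p$ dominates the linear contribution of $\hat F_\lambda$) and weakly sequentially lower semicontinuous; by reflexivity of $X$ and the direct method it admits a global minimizer $u_\lambda\in X$.

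First I would check that $u_\lambda$ is a nontrivial, nonnegative solution of this truncated problem. Evaluating along the ray $t\mapsto t\phi_{1,p}$ with $t\downarrow0$, and using $\tau<q<p$ together with $\hat F_\lambda\ge0$, the concave contribution $-\frac{t^\tau}{\tau}\|\phi_{1,p}\|_\tau^\tau$ dominates the higher-order terms, so $\hat\varphi_\lambda(t\phi_{1,p})<0$ for small $t$; therefore $\hat\varphi_\lambda(u_\lambda)<0=\hat\varphi_\lambda(0)$ and $u_\lambda\neq0$. Testing the Euler equation $A_p(u_\lambda)+A_q(u_\lambda)=\hat f_\lambda(\cdot,u_\lambda)$ with $-u_\lambda^-$ and recalling $\hat f_\lambda(x,t)=0$ for $t\le0$ forces $u_\lambda\ge0$. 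Lieberman's regularity theory and the Pucci–Serrin maximum principle then yield $u_\lambda\in{\rm int}(C_+)$, exactly as in the inclusion $S_\lambda\subseteq{\rm int}(C_+)$ recorded above.

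The crux is to remove the truncation for small $\lambda$. By $({\rm h}_1)$ one has $0\le\hat f_\lambda(x,u_\lambda)\le\rho^{\tau-1}+\lambda c_0\left(\rho^{\theta-1}+\rho^{r-1}\right)=:M$, so comparing $u_\lambda$ with the solution $w\in{\rm int}(C_+)$ of $-\Delta_p w-\Delta_q w=M$ in $\Omega$, $w=0$ on $\rand$, through the weak comparison principle for the monotone operator $u\mapsto-\Delta_p u-\Delta_q u$, and invoking the global bound $\|w\|_\infty\le\hat c\,M^{1/(p-1)}$ (the $p$-Laplacian governing the operator at large values), gives $\|u_\lambda\|_\infty\le\hat c\,M^{1/(p-1)}$. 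It then suffices to arrange $\hat c\,M^{1/(p-1)}\le\rho$, i.e.
\[
\hat c^{\,p-1}\left(\rho^{\tau-p}+\lambda c_0\rho^{\theta-p}+\lambda c_0\rho^{r-p}\right)\le1.
\]
Since $\tau\le\theta\le q<p<r$, I first fix $\rho$ so large that the $\lambda$-independent term satisfies $\hat c^{\,p-1}\rho^{\tau-p}\le\frac12$, and then choose $\lambda$ small enough that the remaining two ($\lambda$-linear, positive) terms sum to at most $\frac12$. For such $\lambda$ the truncation is inactive, whence $\hat f_\lambda(\cdot,u_\lambda)=u_\lambda^{\tau-1}+\lambda f(\cdot,u_\lambda)$ and $u_\lambda$ solves \eqref{problem}, so $\lambda\in\mathcal L$. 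The main obstacle is precisely this last step: securing the quantitative $L^\infty$-bound $\|u_\lambda\|_\infty\lesssim M^{1/(p-1)}$ for the $(p,q)$-operator and exploiting the competing exponents to win by taking $\rho$ large and $\lambda$ small, the superlinear exponent $r>p$ being exactly what confines the conclusion to small parameters.
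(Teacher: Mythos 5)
Your argument is correct but follows a genuinely different route from the paper. The paper first solves an auxiliary problem whose reaction is the majorant $c_0\bigl(t^{\theta-1}+t^{r-1}\bigr)$ from $({\rm h}_1)$ via the mountain pass theorem (the smallness of $\lambda$ enters through the mountain-pass geometry, i.e.\ the function $\hat\gamma_\lambda$), and then truncates the original reaction at that solution $\bar u_\lambda$, so that minimizing the truncated functional produces $u_\lambda\in[0,\bar u_\lambda]\setminus\{0\}$ on which the truncation is inactive. You instead truncate at a constant level $\rho$ and make the truncation inactive through an a priori bound $\Vert u_\lambda\Vert_\infty\le\hat c\,M^{1/(p-1)}$, winning by taking $\rho$ large first and $\lambda$ small afterwards; this is more elementary (no mountain pass, no auxiliary PDE with a special reaction) and gives an explicit admissible range of $\lambda$ in terms of $\hat c$, $c_0$ and the exponents, at the price of needing a quantitative $L^\infty$ estimate. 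The one step you should justify more carefully is precisely that estimate: for the $(p,q)$-operator the scaling argument that gives $\Vert w\Vert_\infty\le\hat c\,M^{1/(p-1)}$ for the $p$-Laplace torsion-type problem is unavailable (the operator is not homogeneous), and the comparison of $w$ with the $p$-torsion function would require $-\Delta_q$ of that function to have a sign, which is unclear on a general domain. The clean fix is to drop the auxiliary function $w$ altogether and apply the De Giorgi--Stampacchia truncation argument directly to $u_\lambda$, using only the coercivity $\vert\xi\vert^p+\vert\xi\vert^q\ge\vert\xi\vert^p$: testing with $(u_\lambda-k)^+$ on the level sets $A_k:=\{u_\lambda>k\}$ yields $\vert A_h\vert\le C M^{p^*/(p-1)}(h-k)^{-p^*}\vert A_k\vert^{(p^*-1)/(p-1)}$ with $(p^*-1)/(p-1)>1$, whence $\Vert u_\lambda\Vert_\infty\le\hat c\,M^{1/(p-1)}$ with $\hat c=\hat c(N,p,\vert\Omega\vert)$ independent of $\rho$ and $\lambda$ (use any exponent $\sigma>r$ in place of $p^*$ when $p\ge N$). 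With that point secured, the rest of your proof (nontriviality along $t\phi_{1,p}$ using $\tau<q$, nonnegativity, regularity, and the choice of $\rho$ and $\lambda$) is sound.
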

\begin{proof}
Given $\lambda> 0$, consider the $C^1$-functional $\Psi_{\lambda}:\Wpzero{p} \to\R$ defined by
$$\Psi_{\lambda}(u):=\frac{1}{p} \Vert \nabla u \Vert_p^p+\frac{1}{q} \Vert \nabla u \Vert_q^q-\int_\Omega dx\int_0^{u(x)}g_\lambda(t)\, dt\quad\forall\, u\in\Wpzero{p},$$
where
$$ g_{\lambda}(t):= (t^+)^{\tau-1}+ \lambda c_0 \left[(t^+)^{\theta-1}+ (t^+)^{r-1}\right],\quad t\in\R.$$
Evidently, $g_\lambda$ fulfills \eqref{AR} once $\sigma\in (p,r)$ and $M>0$ is big enough. So, condition (C) holds true for
$\Psi_{\lambda}$. Moreover, 
$$u\in{\rm int}(C_+)\;\;\implies\;\;\lim_{t\to+\infty}\Psi_\lambda(tu)=-\infty$$
because $r>p$. Observe next that if $s\in[1,p^*]$ then
$$\Vert u\Vert_s\leq c\Vert u\Vert_{p^*}\leq C\Vert u\Vert\quad\forall\, u\in X,$$
with $C:=C(s,\Omega)$. This easily leads to
\begin{equation}\label{9}
\begin{split}
\Psi_{\lambda}(u) & \ge\frac{1}{p}\Vert u\Vert^p- c_4\Vert u\Vert^{\tau}-\lambda c_5\left[\Vert u\Vert^{\theta}+ \Vert u\Vert^r \right] \\
&= \left[\frac{1}{p}- c_4\Vert u\Vert^{\tau-p}-\lambda c_5\left(\Vert u\Vert^{\theta-p}+\Vert u\Vert^{r-p}\right) \right]
\Vert u \Vert^p,\quad u\in X.
\end{split}
\end{equation}
Let us set, for any $t> 0$,
\begin{equation*}
\gamma_{\lambda}(t):= c_4 t^{\tau-p}+\lambda c_5(t^{\theta-p}+ t^{r-p}),\quad
\hat\gamma_{\lambda}(t):= (c_4+ \lambda c_5) t^{\tau-p}+2\lambda c_5 t^{r-p}.
\end{equation*}
From $\tau\le\theta< p< r$ it follows $\lambda c_5 t^{\theta-p}\le\lambda c_5 \left(t^{\tau-p}+ t^{r-p}\right)$, which implies
\begin{equation}\label{10}
0<\gamma_{\lambda}(t)\le\hat\gamma_{\lambda}(t) \quad \text{in}\quad (0,+\infty).
\end{equation}
Since $\lim\limits_{t\to 0^+}\hat\gamma_{\lambda}(t)=\lim\limits_{t\to+\infty}\hat\gamma_{\lambda}(t)=+\infty$, there exists
$t_0> 0$ satisfying $\hat\gamma_{\lambda}'(t_0)= 0$. One has
$$t_0:=t_0(\lambda):=\left[\frac{(c_4+\lambda c_5)(p-\tau)}{2\lambda c_5(r-p)}\right]^{\frac{1}{r-\tau}}$$
and, via simple calculations, $\lim\limits_{\lambda\to 0^+}\hat\gamma_{\lambda}(t_0)=0$. On account of \eqref{9}--\eqref{10} we can thus find $\lambda_0>0$ such that
$$\Psi_{\lambda}(u) \ge m_{\lambda}>0= \Psi_{\lambda}(0) \quad \text{for all } u\in\partial B(0,t_0),\;\lambda\in (0,\lambda_0).$$
Pick $\lambda\in (0,\lambda_0)$. The mountain pass theorem entails $\Psi_{\lambda}'(\bar u_{\lambda})= 0$ and
$\Psi_{\lambda}(\bar u_{\lambda})\geq m_\lambda$ with appropriate $\bar u_{\lambda}\in X$. Hence,
\begin{equation}\label{15}
\langle A_p(\bar u_{\lambda})+A_q(\bar u_{\lambda}), v\rangle= \into\left[(\bar u_{\lambda}^+)^{\tau-1}
+\lambda c_0 \left((\bar u_{\lambda}^+)^{\theta-1}+ (\bar u_{\lambda}^+)^{r-1}\right)\right] v\, dx, \quad v\in X,
\end{equation}
and $\bar u_{\lambda}\ne 0$. Choosing $v:= -\bar u_{\lambda}^- $ in \eqref{15} yields $ \Vert\nabla\bar u_{\lambda}^-\Vert_p^p+ \Vert\nabla\bar u_{\lambda}^- \Vert_q^q= 0$, namely $\bar u_{\lambda}^-= 0$. This forces $\bar u_\lambda\geq 0$ while, by \eqref{15} again, 
$$-\Delta_p\bar u_\lambda- \Delta_q\bar u_\lambda=\bar u_\lambda^{\tau-1}+\lambda c_0 \left(\bar u_\lambda^{\theta-1}+ \bar u_\lambda^{r-1}\right)\;\;\text{in}\;\;\Omega.$$
Lieberman's nonlinear regularity theory and Pucci-Serrin's maximum principle finally lead to $\bar u_{\lambda}\in{\rm int}(C_+)$. Now define, provided $(x,\xi)\in\Omega\times\R$,
\begin{equation*}\label{16}
\bar f_{\lambda}(x,\xi):=
\begin{cases}
(\xi^+)^{\tau-1}+ \lambda f(x,\xi^+) & \text{if }\xi\le\bar u_{\lambda}(x),\\
\bar u_{\lambda}(x)^{\tau-1}+ \lambda f(x,\bar u_{\lambda}(x)) & \text{otherwise},
\end{cases}
\quad\bar F_\lambda(x,\xi):=\int_0^\xi\bar f_\lambda(x,t)\, dt.
\end{equation*}
An easy verification ensures that the associated $C^1$-functional
$$\bar\Phi_{\lambda}(u):=\frac{1}{p}\Vert\nabla u \Vert_p^p+ \frac{1}{q} \Vert\nabla u\Vert_q^q- \into\bar F_{\lambda}(x, u(x))\, dx, \quad u\in X,$$
is coercive and weakly sequentially lower semicontinuous. So, it attains its infimum at some point $ u_{\lambda}\in X$. Assumption
$({\rm h}_1)$ produces
$$\bar\Phi_{\lambda}(u_{\lambda})<0=\bar\Phi_{\lambda}(0),$$
i.e., $u_\lambda\neq 0$, because $\tau<q < p$. As before, from
\begin{equation}\label{19} 
\langle A_p(u_{\lambda})+A_q(u_{\lambda}),v\rangle=\into\bar f_{\lambda}(x, u_{\lambda}(x)) v(x)\, dx \quad\forall\, v\in X
\end{equation}
we infer $u_{\lambda}\ge 0$. Test \eqref{19} with $v:=(u_{\lambda}-\bar u_{\lambda})^+ $, exploit $({\rm h}_1)$ again, and recall \eqref{15} to arrive at
\[
\begin{split}
\langle A_p(u_{\lambda})+A_q(u_{\lambda}), (u_{\lambda}-\bar u_{\lambda})^+ \rangle & = \into\left[\bar u_{\lambda}^{\tau-1}+ \lambda f(\cdot,\bar u_{\lambda})\right] (u_{\lambda}-\bar u_{\lambda})^+ dx \\
& \le\into\left[\bar u_{\lambda}^{\tau-1}+\lambda c_0(\bar u_{\lambda}^{\theta-1}+\bar u_{\lambda}^{r-1})\right]
(u_{\lambda}- \bar u_{\lambda})^+ dx\\
& = \langle A_p(\bar u_{\lambda})+A_q(\bar u_{\lambda}), (u_{\lambda}-\bar u_{\lambda})^+\rangle,
\end{split}
\]
which entails $u_{\lambda}\le\bar u_{\lambda}$ by monotonicity. Summing up, $u_{\lambda}\in[0,\bar u_{\lambda}]\setminus\{0\}$. On account of \eqref{19}, one thus has $u_{\lambda}\in S_{\lambda}$ for any $\lambda\in (0,\lambda_0)$. This completes the proof. 
\end{proof}
Our next result ensures that $ \mathcal L$ is an interval.
\begin{proposition}\label{prop6}
Let $({\rm h}_1)$ be satisfied. If $\hat\lambda\in\mathcal L$ then $(0,\hat\lambda)\subseteq\mathcal L$. 
\end{proposition}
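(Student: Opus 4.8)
The plan is to exploit the monotone dependence of the reaction upon $\lambda$. Since $({\rm h}_1)$ forces $f(\cdot,t)\ge 0$ for $t\ge 0$ and the concave term $\xi^{\tau-1}$ carries no parameter, every solution at $\hat\lambda$ is a supersolution at any smaller value $\lambda$. Concretely, I would fix $\hat u\in S_{\hat\lambda}\subseteq{\rm int}(C_+)$, so that
\[
\langle A_p(\hat u)+A_q(\hat u),v\rangle=\into\left[\hat u^{\tau-1}+\hat\lambda f(\cdot,\hat u)\right]v\,dx,\qquad v\in X,
\]
fix an arbitrary $\lambda\in(0,\hat\lambda)$, and then reproduce the second half of the proof of Proposition \ref{prop5} with $\hat u$ playing the role there assigned to $\bar u_\lambda$.

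First I would introduce the truncated reaction
\[
\tilde f_\lambda(x,\xi):=
\begin{cases}
(\xi^+)^{\tau-1}+\lambda f(x,\xi^+)&\text{if }\xi\le\hat u(x),\\
\hat u(x)^{\tau-1}+\lambda f(x,\hat u(x))&\text{otherwise},
\end{cases}
\]
together with its primitive $\tilde F_\lambda$ and the associated $C^1$-functional
\[
\tilde\Phi_\lambda(u):=\frac{1}{p}\Vert\nabla u\Vert_p^p+\frac{1}{q}\Vert\nabla u\Vert_q^q-\into\tilde F_\lambda(x,u(x))\,dx,\qquad u\in X.
\]
Because the cap keeps $\tilde f_\lambda(x,\cdot)$ bounded for large argument, $\tilde\Phi_\lambda$ is coercive and weakly sequentially lower semicontinuous, hence it attains its infimum at some $u_\lambda\in X$, with Euler equation $\langle A_p(u_\lambda)+A_q(u_\lambda),v\rangle=\into\tilde f_\lambda(\cdot,u_\lambda)v\,dx$ for all $v\in X$.

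Next I would check that $0\le u_\lambda\le\hat u$ and $u_\lambda\ne 0$. Testing the Euler equation with $-u_\lambda^-$ gives $u_\lambda\ge 0$, exactly as for \eqref{19}. For the upper bound I would test with $(u_\lambda-\hat u)^+$ and note that on $\{u_\lambda>\hat u\}$ one has $\tilde f_\lambda(\cdot,u_\lambda)=\hat u^{\tau-1}+\lambda f(\cdot,\hat u)\le\hat u^{\tau-1}+\hat\lambda f(\cdot,\hat u)$; subtracting the equation satisfied by $\hat u$ then yields $\langle A_p(u_\lambda)+A_q(u_\lambda)-A_p(\hat u)-A_q(\hat u),(u_\lambda-\hat u)^+\rangle\le 0$, whence $u_\lambda\le\hat u$ by the monotonicity of $A_p+A_q$. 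Non-triviality comes from evaluating $\tilde\Phi_\lambda$ along $t\phi$, where $\phi\in{\rm int}(C_+)$ and $t>0$ is so small that $t\phi\le\hat u$: since the concave primitive contributes the term $-\frac{t^\tau}{\tau}\into\phi^\tau\,dx$, which dominates the $t^p$ and $t^q$ gradient contributions as $\tau<q<p$, one gets $\tilde\Phi_\lambda(u_\lambda)\le\tilde\Phi_\lambda(t\phi)<0=\tilde\Phi_\lambda(0)$, so $u_\lambda\ne 0$.

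Finally, since $0\le u_\lambda\le\hat u$, the cap is never active along $[0,u_\lambda]$, so $\tilde f_\lambda(\cdot,u_\lambda)=u_\lambda^{\tau-1}+\lambda f(\cdot,u_\lambda)$ and $u_\lambda$ solves \eqref{problem}; Lieberman's regularity theory and the Pucci-Serrin maximum principle then give $u_\lambda\in{\rm int}(C_+)$, i.e. $u_\lambda\in S_\lambda$ and $\lambda\in\mathcal L$. The only genuinely delicate step is the comparison $u_\lambda\le\hat u$: it hinges on the cap being chosen exactly at the value $\hat u^{\tau-1}+\lambda f(\cdot,\hat u)$, so that replacing $\lambda$ by $\hat\lambda$ renders the difference of reactions nonpositive on $\{u_\lambda>\hat u\}$ and lets the monotonicity of the operator close the argument. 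Everything else is a verbatim repetition of the machinery already deployed in Proposition \ref{prop5}.
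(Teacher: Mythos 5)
Your proposal is correct and follows essentially the same route as the paper: the authors define exactly the same truncation of the reaction at $\hat u$, note that the resulting functional is coercive and weakly sequentially lower semicontinuous, and then refer back to the second half of the proof of Proposition \ref{prop5} for the remaining steps, which you have simply written out in full (including the key observation that $\lambda f(\cdot,\hat u)\le\hat\lambda f(\cdot,\hat u)$ since $f\ge 0$ on $\Omega\times\R_+$ by $({\rm h}_1)$).
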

\begin{proof}
Pick $\hat u\in S_{\hat\lambda}$, $\lambda\in(0,\hat\lambda)$, and define, provided $(x,\xi)\in\Omega\times\R$,
\begin{equation*}
\hat f_{\lambda}(x,\xi):=
\begin{cases}
(\xi^+)^{\tau-1}+\lambda f(x,\xi^+) & \text{if }\xi\le\hat u(x), \\
\hat u(x)^{\tau-1}+\lambda f(x,\hat u(x)) & \text{otherwise},
\end{cases}
\quad \hat F_\lambda(x,\xi):=\int_0^\xi\hat f_\lambda (x,t)\, dt.
\end{equation*}
The associated energy functional
$$\hat\Phi_{\lambda}(u):=\frac{1}{p}\Vert\nabla u\Vert_p^p+\frac{1}{q}\Vert\nabla u\Vert_q^q-\into\hat F_{\lambda}(x,u(x))\, dx, \quad u\in X,$$
turns out coercive, weakly sequentially lower semicontinuous, besides $C^1$. Now, arguing exactly as above yields the conclusion.
\end{proof}
A careful reading of this proof allows one to state the next `monotonicity' property. 
\begin{corollary}\label{cor7}
Under hypothesis $({\rm h}_1)$, for every $\hat\lambda\in\mathcal L$, $u_{\hat\lambda}\in S_{\hat\lambda}$, and  $\lambda\in (0,\hat\lambda)$ there exists $u_{\lambda}\in S_{\lambda}$ such that $u_{\lambda} \le u_{\hat\lambda}$. 
\end{corollary}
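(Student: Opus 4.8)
The plan is to run the construction from the proof of Proposition~\ref{prop6} once more, this time with the prescribed solution $u_{\hat\lambda}$ playing the role of the upper obstacle, and to read off from it the extra comparison bound $u_\lambda\le u_{\hat\lambda}$. Concretely, given $\hat\lambda\in\mathcal L$, $u_{\hat\lambda}\in S_{\hat\lambda}$, and $\lambda\in(0,\hat\lambda)$, I would define the truncated Carath\'eodory function
\[
\hat f_{\lambda}(x,\xi):=
\begin{cases}
(\xi^+)^{\tau-1}+\lambda f(x,\xi^+) & \text{if }\xi\le u_{\hat\lambda}(x),\\
u_{\hat\lambda}(x)^{\tau-1}+\lambda f(x,u_{\hat\lambda}(x)) & \text{otherwise},
\end{cases}
\]
together with its primitive and the associated energy functional $\hat\Phi_\lambda$, exactly as there. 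Since $u_{\hat\lambda}\in{\rm int}(C_+)$ provides a fixed $L^\infty$ bound, $\hat\Phi_\lambda$ stays coercive and weakly sequentially lower semicontinuous, hence attains its infimum at some $u_\lambda\in X$.

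Next I would recover, verbatim from the arguments in Propositions~\ref{prop5} and~\ref{prop6}, that this minimizer is a genuine nonnegative solution of the truncated problem. Indeed $\hat\Phi_\lambda(u_\lambda)<0=\hat\Phi_\lambda(0)$ by $({\rm h}_1)$ and $\tau<q<p$, so $u_\lambda\ne 0$; testing the Euler--Lagrange identity
\[
\langle A_p(u_\lambda)+A_q(u_\lambda),v\rangle=\into\hat f_\lambda(x,u_\lambda)v\,dx,\qquad v\in X,
\]
with $v:=-u_\lambda^-$ and using the structure of $\hat f_\lambda$ gives $\Vert\nabla u_\lambda^-\Vert_p^p+\Vert\nabla u_\lambda^-\Vert_q^q=0$, whence $u_\lambda\ge 0$.

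The decisive step is the upper comparison $u_\lambda\le u_{\hat\lambda}$, and this is where I expect the only genuine subtlety to lie. I would test the identity above with $v:=(u_\lambda-u_{\hat\lambda})^+$. On the set $\{u_\lambda>u_{\hat\lambda}\}$ the truncation forces $\hat f_\lambda(x,u_\lambda)=u_{\hat\lambda}^{\tau-1}+\lambda f(x,u_{\hat\lambda})$, so the right-hand side equals $\into[u_{\hat\lambda}^{\tau-1}+\lambda f(\cdot,u_{\hat\lambda})](u_\lambda-u_{\hat\lambda})^+\,dx$. The key observation is that $f\ge 0$ on $\Omega\times\R_+$ by the lower bound in $({\rm h}_1)$, and $u_{\hat\lambda}\ge 0$; since $\lambda<\hat\lambda$, this yields the pointwise inequality $u_{\hat\lambda}^{\tau-1}+\lambda f(\cdot,u_{\hat\lambda})\le u_{\hat\lambda}^{\tau-1}+\hat\lambda f(\cdot,u_{\hat\lambda})$, and the latter is precisely the reaction solved by $u_{\hat\lambda}\in S_{\hat\lambda}$. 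Substituting the weak formulation of the equation for $u_{\hat\lambda}$, I would obtain
\[
\langle A_p(u_\lambda)+A_q(u_\lambda)-A_p(u_{\hat\lambda})-A_q(u_{\hat\lambda}),(u_\lambda-u_{\hat\lambda})^+\rangle\le 0,
\]
so that strict monotonicity of $A_p+A_q$ forces $(u_\lambda-u_{\hat\lambda})^+=0$, i.e.\ $u_\lambda\le u_{\hat\lambda}$.

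Putting these together, $u_\lambda\in[0,u_{\hat\lambda}]\setminus\{0\}$; on this order interval one has $\hat f_\lambda(x,u_\lambda)=u_\lambda^{\tau-1}+\lambda f(x,u_\lambda)$, so the Euler--Lagrange identity says exactly that $u_\lambda$ solves \eqref{problem}, whence $u_\lambda\in S_\lambda$, and by construction $u_\lambda\le u_{\hat\lambda}$, as claimed. The place where hypothesis $({\rm h}_1)$ is really used is the monotonicity-in-$\lambda$ inequality of the comparison step: it is the sign condition $f\ge 0$ together with $\lambda<\hat\lambda$ that permits replacing $\lambda$ by $\hat\lambda$ and thereby testing against the fixed solution $u_{\hat\lambda}$. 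Everything else is a transcription of the estimates already carried out in Propositions~\ref{prop5} and~\ref{prop6}.
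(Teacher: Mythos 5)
Your proposal is correct and is exactly the argument the paper intends: the corollary is obtained by a ``careful reading'' of the proof of Proposition~\ref{prop6}, i.e.\ truncating at $u_{\hat\lambda}$, minimizing the resulting coercive functional, and using $f\ge 0$ together with $\lambda<\hat\lambda$ to compare against the equation satisfied by $u_{\hat\lambda}$ when testing with $(u_\lambda-u_{\hat\lambda})^+$. You have merely made explicit the comparison step that the paper leaves implicit, so nothing further is needed.
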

Actually, we can prove a more precise assertion.
\begin{proposition}\label{prop8}
Suppose $({\rm h}_1)$ and $({\rm h}_4)$ hold. Then to each $\hat\lambda\in\mathcal L$, $u_{\hat\lambda}\in  S_{\hat\lambda}$,
$\lambda\in(0,\hat\lambda)$ there corresponds $u_{\lambda}\in S_{\lambda}$ fulfilling
$u_{\hat\lambda}- u_{\lambda}\in{\rm int}(C_+)$.
\end{proposition}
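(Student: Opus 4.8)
The plan is to sharpen the mere inequality already at our disposal into the strict ordering via the strong comparison principle of Proposition~\ref{arcoya-ruiz}. First I would invoke Corollary~\ref{cor7}, which furnishes some $u_{\lambda}\in S_{\lambda}$ with $u_{\lambda}\le u_{\hat\lambda}$; recall that both functions lie in ${\rm int}(C_+)$, since $S_{\lambda}\subseteq{\rm int}(C_+)$. The task is then to upgrade $u_{\lambda}\le u_{\hat\lambda}$ to $u_{\hat\lambda}-u_{\lambda}\in{\rm int}(C_+)$. To apply Proposition~\ref{arcoya-ruiz}, I would cast the two equations
$$-\Delta_p u_{\lambda}-\Delta_q u_{\lambda}=u_{\lambda}^{\tau-1}+\lambda f(\cdot,u_{\lambda}),\qquad -\Delta_p u_{\hat\lambda}-\Delta_q u_{\hat\lambda}=u_{\hat\lambda}^{\tau-1}+\hat\lambda f(\cdot,u_{\hat\lambda})$$
in the form required there and verify the strict ordering $h_1\prec h_2$ of the associated right-hand sides.

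To this end I would set $\rho:=\Vert u_{\hat\lambda}\Vert_\infty$, take the corresponding $\mu_{\rho}>0$ supplied by $({\rm h}_4)$, and choose $a:=\lambda\mu_{\rho}\in\R_+$. Adding $a\,u_i^{p-1}$ to both sides yields equations of the type in Proposition~\ref{arcoya-ruiz} with
$$h_1:=u_{\lambda}^{\tau-1}+\lambda\bigl[f(\cdot,u_{\lambda})+\mu_{\rho}u_{\lambda}^{p-1}\bigr],\qquad h_2:=u_{\hat\lambda}^{\tau-1}+\lambda\bigl[f(\cdot,u_{\hat\lambda})+\mu_{\rho}u_{\hat\lambda}^{p-1}\bigr]+(\hat\lambda-\lambda)f(\cdot,u_{\hat\lambda}).$$
The crux is the decomposition
$$h_2-h_1=\bigl(u_{\hat\lambda}^{\tau-1}-u_{\lambda}^{\tau-1}\bigr)+\lambda\Bigl\{\bigl[f(\cdot,u_{\hat\lambda})+\mu_{\rho}u_{\hat\lambda}^{p-1}\bigr]-\bigl[f(\cdot,u_{\lambda})+\mu_{\rho}u_{\lambda}^{p-1}\bigr]\Bigr\}+(\hat\lambda-\lambda)f(\cdot,u_{\hat\lambda}).$$
The first summand is nonnegative because $\tau>1$ and $u_{\lambda}\le u_{\hat\lambda}$, while the second is nonnegative since $0\le u_{\lambda}\le u_{\hat\lambda}\le\rho$ and $({\rm h}_4)$ renders $t\mapsto f(x,t)+\mu_{\rho}t^{p-1}$ nondecreasing on $[0,\rho]$.

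It then remains to extract strict positivity from the last term. As $\hat\lambda>\lambda$ and the lower bound in $({\rm h}_1)$ gives $f(x,u_{\hat\lambda}(x))\ge c_1 u_{\hat\lambda}(x)^{p-1}$, I obtain $h_2-h_1\ge(\hat\lambda-\lambda)c_1 u_{\hat\lambda}^{p-1}$. For any nonempty compact $K\subseteq\Omega$, the inclusion $u_{\hat\lambda}\in{\rm int}(C_+)$ forces $\min_K u_{\hat\lambda}>0$, whence $\essinf_{K}(h_2-h_1)\ge(\hat\lambda-\lambda)c_1(\min_K u_{\hat\lambda})^{p-1}>0$. This is precisely $h_1\prec h_2$, so Proposition~\ref{arcoya-ruiz}, applied with $u_1:=u_{\lambda}\in C^1_0(\close)$ and $u_2:=u_{\hat\lambda}\in{\rm int}(C_+)$, delivers $u_{\hat\lambda}-u_{\lambda}\in{\rm int}(C_+)$. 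The only delicate point is the bookkeeping behind the choice $a=\lambda\mu_{\rho}$: it must let $({\rm h}_4)$ absorb the possibly negative increment $f(\cdot,u_{\hat\lambda})-f(\cdot,u_{\lambda})$ while leaving the genuinely positive slack $(\hat\lambda-\lambda)f(\cdot,u_{\hat\lambda})$ intact; everything else reduces to routine monotonicity of power functions.
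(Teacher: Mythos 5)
Your proposal is correct and follows essentially the same route as the paper: take $u_{\lambda}\le u_{\hat\lambda}$ from Corollary~\ref{cor7}, add $\lambda\mu_{\rho}u_i^{p-1}$ with $\rho=\Vert u_{\hat\lambda}\Vert_\infty$ so that $({\rm h}_4)$ makes the truncated reaction monotone, isolate the slack $(\hat\lambda-\lambda)f(\cdot,u_{\hat\lambda})$, bound it below via $({\rm h}_1)$ and $u_{\hat\lambda}\in{\rm int}(C_+)$ to get $h_1\prec h_2$, and conclude with Proposition~\ref{arcoya-ruiz}. The only cosmetic difference is that you spell out the three-term decomposition of $h_2-h_1$ explicitly, whereas the paper folds the first two terms into a single pointwise inequality.
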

\begin{proof}
Write $\rho:=\Vert u_{\hat\lambda}\Vert_{\infty}$. If $\mu_{\rho}$ is given by $({\rm h}_4)$ while $ u_{\lambda}$ comes from Corollary \ref{cor7} then
\begin{equation}\label{23}
\begin{split}
-\Delta_p u_{\hat\lambda} & - \Delta_q u_{\hat\lambda} +\lambda\mu_{\rho} u_{\hat\lambda}^{p-1}= u_{\hat\lambda}^{\tau-1}+ \hat\lambda f(x, u_{\hat\lambda})+ \lambda\mu_{\rho} u_{\hat\lambda}^{p-1}\\
&= u_{\hat\lambda}^{\tau-1}+\lambda f(x, u_{\hat\lambda})+\lambda\mu_{\rho} u_{\hat\lambda}^{p-1}+ (\hat\lambda-\lambda) f(x, u_{\hat\lambda})\\
& \ge u_{\lambda}^{\tau-1}+ \lambda f(x,u_{\lambda})+\lambda\mu_{\rho} u_{\lambda}^{p-1}=-\Delta_p u_{\lambda}- \Delta_q u_{\lambda}+\lambda\mu_{\rho} u_{\lambda}^{p-1}
\end{split}
\end{equation} 
because $u_\lambda\leq u_{\hat\lambda}$ and $f(x,t)\geq 0$ once $t\geq 0$. The function $h(x):=(\hat\lambda-\lambda) f(x, u_{\hat\lambda}(x))$ lies in $L^{\infty}(\Omega)$. Indeed, on account of $({\rm h}_1)$, we have
$$0\le h(x)\le c_0(\hat\lambda-\lambda)\left[\Vert u\Vert_{\infty}^{\theta-1}+\Vert u\Vert_{\infty}^{r-1}\right]\quad\forall\, x\in\Omega.$$
Pick any compact set $K\subseteq\Omega$. Recalling that $u_{\hat\lambda}\in{\rm int}(C_+)$ and using  $({\rm h}_1)$ again gives
$$h(x)\ge (\hat\lambda-\lambda)\left[ c_1 u_{\hat\lambda}(x)^{p-1}+c_2 u_{\hat\lambda}(x)^{q-1}\right]
\ge\left( c_1 \inf_{K} u_{\hat\lambda}^{p-1}+c_2 \inf_{K} u_{\hat\lambda}^{q-1}\right)>0,\;\; x\in\Omega,$$
whence $0\prec h$. Now, \eqref{23} combined with Proposition \ref{arcoya-ruiz} entail $u_{\hat\lambda}-u_{\lambda}
\in{\rm int}(C_+)$.
\end{proof}
The interval $\mathcal L$ turns out to be bounded.
\begin{proposition}\label{prop9}
Let $({\rm h}_1)$ and $({\rm h}_4)$ be satisfied. If $\lambda^*:=\sup\mathcal L$ then $\lambda^*< \infty$.
\end{proposition}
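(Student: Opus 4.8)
The plan is to argue by contradiction: assuming $\lambda^*=\sup\mathcal L=+\infty$, I will produce, for arbitrarily large $\lambda\in\mathcal L$, a uniform upper bound on $\lambda$ itself, which is absurd. The engine is the lower estimate in $({\rm h}_1)$, namely $f(x,t)\ge c_1 t^{p-1}+c_2 t^{q-1}$, so that every $u\in S_\lambda$ satisfies, in the weak sense,
\[
-\Delta_p u-\Delta_q u=u^{\tau-1}+\lambda f(x,u)\ge\lambda c_1 u^{p-1}+\lambda c_2 u^{q-1}\quad\text{in }\Omega.
\]
Since the leading operator is the $p$-Laplacian, I would test this inequality against the Picone-type function $w:=\phi_{1,p}^p/u^{p-1}$, which belongs to $X$ because $u\in S_\lambda\subseteq{\rm int}(C_+)$ forces $w$ to vanish at the right rate on $\rand$.

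First I would invoke Picone's inequality for $A_p$, giving $\langle A_p(u),w\rangle\le\|\nabla\phi_{1,p}\|_p^p=\lambda_{1,p}$ (recall $\|\phi_{1,p}\|_p=1$ by $({\rm p}_3)$, whence $\|\nabla\phi_{1,p}\|_p^p=\lambda_{1,p}$). Testing the displayed differential inequality with $w$, dropping the nonnegative contributions of $u^{\tau-1}$ and of $\lambda c_2 u^{q-1}$, and using $\into u^{p-1}w\,dx=\|\phi_{1,p}\|_p^p=1$, I obtain
\[
\lambda_{1,p}+\langle A_q(u),w\rangle\ge\langle A_p(u)+A_q(u),w\rangle\ge\lambda c_1 .
\]
Thus the whole difficulty is pushed into the cross term $\langle A_q(u),w\rangle$: a bound $\langle A_q(u),w\rangle\le C$ with $C$ \emph{independent of} $\lambda$ would give $\lambda c_1\le\lambda_{1,p}+C$, capping $\lambda$ and yielding the contradiction.

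To control $\langle A_q(u),w\rangle=\into|\nabla u|^{q-2}\nabla u\cdot\nabla\!\left(\phi_{1,p}^p u^{1-p}\right)dx$ I would expand the gradient and apply Young's inequality, arranging that the negative term $-(p-1)\into\phi_{1,p}^p u^{-p}|\nabla u|^q\,dx$ absorbs the $|\nabla u|^q$ generated by the mixed term; because $q<p$, the residual weight reduces to $(\phi_{1,p}/u)^{p-q}$, so that
\[
\langle A_q(u),w\rangle\le C(p,q)\into\left(\frac{\phi_{1,p}}{u}\right)^{p-q}|\nabla\phi_{1,p}|^q\,dx .
\]
The decisive point is then a \emph{uniform lower bound} $u\ge\underline u$, valid for all large $\lambda\in\mathcal L$ and all $u\in S_\lambda$, with a fixed $\underline u\in{\rm int}(C_+)$: granting it, $(\phi_{1,p}/u)^{p-q}\le\|\phi_{1,p}/\underline u\|_\infty^{p-q}<\infty$ (both functions lying in ${\rm int}(C_+)$), whence $\langle A_q(u),w\rangle\le C'$ with $C'$ independent of $\lambda$.

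Such a lower bound is where the monotonicity machinery pays off. Fixing $\lambda_0\in\mathcal L$ (nonempty by Proposition \ref{prop5}) and a fixed $\underline u\in{\rm int}(C_+)$ lying below $S_{\lambda_0}$, Proposition \ref{prop8} applied with $\hat\lambda=\lambda>\lambda_0$ and $u_{\hat\lambda}=u$ furnishes some $v\in S_{\lambda_0}$ with $u-v\in{\rm int}(C_+)$, so $u\ge v\ge\underline u$. I expect the main obstacle to be exactly this cross term produced by the $q$-Laplacian: the Picone test function is tuned to $A_p$ and not to $A_q$, and only the interplay of the Young-type absorption (which rests on $q<p$) with the uniform lower bound saves the argument; the delicate region is the boundary layer, where $\phi_{1,p}/u$ tends to a finite ratio of normal derivatives, so the weight $(\phi_{1,p}/u)^{p-q}$ must be kept under control there uniformly in $\lambda$.
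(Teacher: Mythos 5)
Your Picone strategy is genuinely different from the paper's proof, and the analytic core of it is sound: $w=\phi_{1,p}^p/u^{p-1}$ is an admissible test function because $u,\phi_{1,p}\in{\rm int}(C_+)$, the classical Picone inequality gives $\langle A_p(u),w\rangle\le\Vert\nabla\phi_{1,p}\Vert_p^p=\lambda_{1,p}$, and your Young-type absorption of the cross term is correct: expanding $\nabla w$ and applying Young's inequality with exponents $q/(q-1)$ and $q$ yields pointwise
\[
|\nabla u|^{q-2}\nabla u\cdot\nabla w\le\frac{q-p}{q}\left(\frac{\phi_{1,p}}{u}\right)^{p}|\nabla u|^{q}+\frac{p}{q}\left(\frac{\phi_{1,p}}{u}\right)^{p-q}|\nabla\phi_{1,p}|^{q},
\]
and the first term is nonpositive since $q<p$. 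So, as you say, everything reduces to a lower bound $u\ge\underline u$ with a \emph{fixed} $\underline u\in{\rm int}(C_+)$. For comparison, the paper proceeds quite differently: it truncates the lower bound $\lambda\left(c_1t^{p-1}+c_2t^{q-1}\right)$ at a known solution $u_\lambda$, minimizes the truncated functional to produce a nontrivial solution of $-\Delta_pu-\Delta_qu=\lambda c_1|u|^{p-2}u+\lambda c_2|u|^{q-2}u$ in $[0,u_\lambda]$, and then invokes the nonexistence region of Bobkov--Tanaka \cite{BT} for this two-parameter eigenvalue problem; that route exploits $c_2>\lambda_{1,q}$, whereas yours only needs $c_1>0$.

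The genuine gap is the uniform lower bound itself. Proposition \ref{prop8}, applied with $\hat\lambda=\lambda$ and $u_{\hat\lambda}=u$, produces \emph{some} $v\in S_{\lambda_0}$ with $u\ge v$, but $v$ depends on $u$ and on $\lambda$; you have therefore only transferred the difficulty from $S_\lambda$ to $S_{\lambda_0}$, which is a priori an infinite set with no uniform interior lower bound. The existence of a single $\underline u\in{\rm int}(C_+)$ lying below every element of $S_{\lambda_0}$ is exactly the nontrivial point, and you assert it without proof. It can be repaired: the unique positive solution $\tilde u\in{\rm int}(C_+)$ of the purely concave problem $-\Delta_pu-\Delta_qu=u^{\tau-1}$ satisfies $\tilde u\le u$ for every $u\in S_\mu$ and every $\mu>0$ (this is the Claim inside the proof of Proposition \ref{prop12}; its argument uses only $({\rm h}_1)$, namely $f\ge0$ on $\Omega\times\R_+$, together with the uniqueness of $\tilde u$, so there is no circularity in importing it here). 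Taking $\underline u:=\tilde u$ closes your argument and even makes the detour through $S_{\lambda_0}$ and Proposition \ref{prop8} unnecessary. As written, however, the proof is incomplete at its decisive step.
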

\begin{proof}
Fix $\lambda\in\mathcal L$, $u_{\lambda}\in S_{\lambda}$. Note that we can suppose $ \lambda> 1$, otherwise $ \mathcal L$ would be bounded, which of course entails $ \lambda^*< \infty $. Define
\begin{equation*}
g_{\lambda}(x,\xi):=
\begin{cases}
\lambda\left[ c_1(\xi^+)^{p-1}+c_2( \xi^+)^{q-1}\right] & \text{if }\xi\le u_{\lambda}(x),\\
\lambda\left[c_1 u_{\lambda}(x)^{p-1}+ c_2 u_{\lambda}(x)^{q-1}\right] & \text{otherwise}, 
\end{cases}
\; G_{\lambda}(x, \xi):=\int_0^\xi g_{\lambda}(x,t)dt
\end{equation*}
for every $(x,\xi)\in\Omega\times\R$, as well as
$$\Psi_\lambda(u):=\frac{1}{p}\Vert\nabla u \Vert_p^p+ \frac{1}{q}\Vert\nabla u \Vert_q^q- \into G_{\lambda}(x,u(x))\, dx,\quad
u\in X.$$
The same arguments employed before yield here a global minimum point, say $\bar u_\lambda$, to $\Psi_\lambda$. So, in particular,
\begin{equation}\label{barulambda}
\langle A_p(\bar u_\lambda)+A_q(\bar u_\lambda),v\rangle=\into g_{\lambda}(x, \bar u_\lambda(x))v(x)\, dx \quad\forall\, v \in X.
\end{equation}
Choosing $v:= -\bar u_\lambda^-$ first and then $v:= (\bar u_\lambda- u_{\lambda})^+ $ we obtain $\bar u_\lambda\in [0, u_{\lambda}]$; cf. the proof of Proposition \ref{prop5}. Since, by $({\rm p}_3)$ in Section \ref{prel}, $u_{\lambda},\phi_{1,q}\in{\rm int}(C_+)$, through \cite[Proposition 1]{MP} one has $t\phi_{1,q}\leq u_\lambda$, with $t>0$ small enough. Thus, on account of $({\rm p}_3)$ again,
\begin{equation*}
\begin{split}
\Psi_{\lambda}(t\phi_{1,q}) &= \frac{1}{p}\Vert\nabla(t\phi_{1,q})\Vert_p^p+\frac{1}{q} \Vert\nabla(t\phi_{1,q})\Vert_q^q
-\into G_{\lambda}(x,t\phi_{1,q}(x))\, dx\\
&=\frac{t^p}{p} \Vert\nabla\phi_{1,q}\Vert_p^p+\frac{t^q}{q}\Vert\nabla\phi_{1,q}\Vert_q^q-
\into\lambda\left(c_1\frac{t^p}{p}\phi_{1,q}^p+c_2 \frac{t^q}{q}\phi_{1,q}^q \right) dx\\
&= \frac{t^p}{p}\Vert\nabla\phi_{1,q}\Vert_p^p+\frac{t^q}{q}\lambda_{1,q}-\lambda c_1\frac{t^p}{p}\Vert\phi_{1,q}\Vert_p^p
-\lambda c_2\frac{t^q}{q}\\
& \le\frac{t^p}{p}\Vert\nabla\phi_{1,q}\Vert_p^p+ \frac{t^q}{q}\left(\lambda_{1,q}-\lambda c_2 \right) \\
& < \frac{t^p}{p}\Vert\nabla\phi_{1,q}\Vert_p^p+\frac{t^q}{q}\lambda_{1,q} (1-\lambda)= c_6 t^p- c_7 t^q.
\end{split}
\end{equation*}
Now, recall that $q<p$ and decrease $t$ when necessary to achieve 
$$\Psi_{\lambda}(\bar u_\lambda)=\min_X\Psi_\lambda\le\Psi_{\lambda}(t\phi_{1,q})< 0=\Psi_{\lambda}(0),$$
i.e., $\bar u_\lambda\neq 0$. Summing up, $\bar u_\lambda\in [0,u_\lambda]\setminus\{0\}$, whence, by \eqref{barulambda},  it turns out a positive solution of the equation
$$-\Delta_p u-\Delta_q u=\lambda c_1|u|^{p-2}u+\lambda c_2|u|^{q-2}u\quad\text{in}\quad\Omega.$$
Due to \cite[Theorem 2.4]{BT}, this prevents $\lambda$ from being arbitrary large, as desired.
\end{proof}
Le us finally prove that $\mathcal L=(0, \lambda^*]$. From now on, $\Phi_\lambda:X\to\R$ will denote the $C^1$-energy functional associated with problem \eqref{problem}. Evidently,
\begin{equation}\label{29'}
\Phi_{\lambda}(u)= \frac{1}{p} \Vert\nabla u\Vert_p^p+\frac{1}{q}\Vert \nabla u\Vert_q^q-\frac{1}{\tau}\Vert u^+ \Vert_{\tau}^{\tau}-\lambda \into F(x, u^+(x))\, dx \quad\forall\, u\in X.
\end{equation}
\begin{proposition}\label{prop10}
Under $({\rm h}_1)$, $({\rm h}_3)$, and $({\rm h}_4)$ one has $\lambda^* \in\mathcal L$.
\end{proposition}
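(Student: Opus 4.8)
The plan is to pick an increasing sequence $\lambda_n\to\lambda^*$ with $\lambda_n\in\mathcal L$, produce for each $n$ a solution $u_n\in S_{\lambda_n}$, and show that some subsequence converges strongly in $X$ to a nontrivial $u^*\in S_{\lambda^*}$. For each $n$ I would take $u_n$ to be the minimizing-type solution furnished by Proposition \ref{prop5} (as extended by Proposition \ref{prop6}): being a minimizer of the truncated energy, which agrees with $\Phi_{\lambda_n}$ on $[0,u_n]$, it satisfies $\Phi_{\lambda_n}(u_n)<0$ together with $\langle\Phi_{\lambda_n}'(u_n),u_n\rangle=0$. The whole argument then reduces to two points: a uniform bound $\|u_n\|\le c$, and the nontriviality of the limit.

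The decisive step, and the main obstacle, is the boundedness of $\{u_n\}$ in $X$; here I would use not $({\rm h}_2)$ but only $({\rm h}_3)$. Forming $p\Phi_{\lambda_n}(u_n)-\langle\Phi_{\lambda_n}'(u_n),u_n\rangle<0$ and discarding the nonnegative $q$-gradient contribution leaves $\lambda_n\into[f(x,u_n)u_n-pF(x,u_n)]\,dx\le(\tfrac p\tau-1)\|u_n\|_\tau^\tau$. Assumption $({\rm h}_3)$ bounds the left-hand side below by $\lambda_n c_3\|u_n\|_\beta^\beta-c$, while $\tau<\beta$ lets H\"older dominate the right-hand side by $C\|u_n\|_\beta^\tau+C$; since $\lambda_n\ge\lambda^*/2>0$ for large $n$ and $\beta>\tau$, this gives a bound for $\{u_n\}$ in $L^\beta(\Omega)$. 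To upgrade this to a bound in $X$, I would test the equation with $u_n$ and use the upper growth in $({\rm h}_1)$ to get $\|u_n\|^p\le\|u_n\|_\tau^\tau+\lambda_n c_0(\|u_n\|_\theta^\theta+\|u_n\|_r^r)$, then control the only dangerous term $\|u_n\|_r^r$ by interpolating $L^r$ between $L^\beta$ and $L^{p^*}$ and invoking Sobolev, $\|u_n\|_{p^*}\le C\|u_n\|$. This yields $\|u_n\|^p\le C+C\|u_n\|^{tr}$ with the interpolation exponent satisfying $tr<p$ \emph{precisely} because $(r-p)\max\{Np^{-1},1\}<\beta$; hence $\{u_n\}$ is bounded.

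With boundedness in hand I would pass to a subsequence such that $u_n\rightharpoonup u^*$ in $X$ and $u_n\to u^*$ in every $L^s$, $s<p^*$, and pointwise. Testing the equation for $u_n$ with $u_n-u^*$, the right-hand side tends to $0$ by the growth bound in $({\rm h}_1)$ and strong $L^s$-convergence, while monotonicity of $A_q$ gives $\langle A_q(u_n),u_n-u^*\rangle\ge\langle A_q(u^*),u_n-u^*\rangle\to0$; therefore $\limsup_{n\to\infty}\langle A_p(u_n),u_n-u^*\rangle\le0$, and the $({\rm S})_+$ property of $A_p$ forces $u_n\to u^*$ in $X$. Letting $n\to\infty$ in the weak formulation, with $\lambda_n\to\lambda^*$, shows that $u^*\ge0$ solves the equation in \eqref{problem} at $\lambda=\lambda^*$.

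It remains to rule out $u^*=0$, and this is where the lower bound in $({\rm h}_1)$ with $c_2>\lambda_{1,q}$, together with $({\rm h}_4)$, enters. Since $f(x,t)\ge c_1t^{p-1}+c_2t^{q-1}$ and $u_n^{\tau-1}\ge0$, each $u_n$ is a supersolution of the auxiliary problem $-\Delta_p w-\Delta_q w=\lambda_1(c_1 w^{p-1}+c_2 w^{q-1})$ in $\Omega$ (recall $\lambda_n\ge\lambda_1$). Arguing exactly as in Proposition \ref{prop9}, this problem admits a positive solution $\tilde u\in{\rm int}(C_+)$, which may be taken minimal and hence $\tilde u\le u_n$ for every $n$ by a sub--supersolution comparison, $({\rm h}_4)$ and Proposition \ref{arcoya-ruiz} supplying the required monotonicity. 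Passing to the limit in $\tilde u\le u_n$ gives $u^*\ge\tilde u$, so $u^*\ne0$; Lieberman's regularity theory and the Pucci--Serrin maximum principle then place $u^*\in{\rm int}(C_+)$, whence $u^*\in S_{\lambda^*}$ and $\lambda^*\in\mathcal L$. I expect the uniform boundedness estimate to be the real work, with the construction of a single lower solution $\tilde u$ below all the $u_n$ the second delicate ingredient.
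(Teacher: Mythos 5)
Your boundedness argument is exactly the paper's: form $p\Phi_{\lambda_n}(u_n)-\langle\Phi_{\lambda_n}'(u_n),u_n\rangle<0$, use $({\rm h}_3)$ to get a uniform $L^\beta$-bound, then interpolate $L^r$ between $L^\beta$ and $L^{p^*}$ (with the case $p\ge N$ handled by replacing $p^*$ with a large $\sigma$, and the case $r\le\beta$ being trivial) and check $tr<p$ from $(r-p)\max\{Np^{-1},1\}<\beta$. The passage to the limit via monotonicity of $A_q$ and the $({\rm S})_+$ property of $A_p$ is also the paper's. So the core of the proof is sound and essentially identical.

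The gap is in your nontriviality step. You compare each $u_n$ from below with a positive solution $\tilde u$ of the auxiliary problem $-\Delta_p w-\Delta_q w=\lambda_1\left(c_1 w^{p-1}+c_2 w^{q-1}\right)$, asserting that $\tilde u$ "may be taken minimal and hence $\tilde u\le u_n$ for every $n$." Neither claim is justified: for this two-parameter $(p,q)$-eigenvalue-type problem positive solutions need not be unique, a smallest positive solution is not known to exist, and even if one existed, minimality among solutions does not by itself place it below an arbitrary supersolution. The truncation device of Proposition \ref{prop9} produces, for each $n$, some positive solution $w_n\in[0,u_n]$ of the auxiliary problem, but these depend on $n$ and give no uniform positive lower bound. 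Two clean repairs are available. The paper's: invoke Corollary \ref{cor7} to choose the $u_n$ \emph{increasing} ($u_n\le u_{n+1}$), so that $u^*\ge u_1\in{\rm int}(C_+)$ and nontriviality is immediate — note your increasing choice of $\lambda_n$ alone does not order the $u_n$. Alternatively, argue as in Proposition \ref{prop12}: the problem $-\Delta_p u-\Delta_q u=u^{\tau-1}$ has a \emph{unique} positive solution $\tilde u$, and the truncation-plus-uniqueness argument there shows $\tilde u\le u$ for every $u\in S_{\lambda}$ and every $\lambda$, hence $\tilde u\le u_n$ for all $n$ and $u^*\ge\tilde u>0$. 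With either repair the rest of your proof goes through.
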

\begin{proof}
Pick any $\{\lambda_n\}\subseteq (0, \lambda^*)$ fulfilling $\lambda_n\uparrow\lambda^*$. Via Corollary \ref{cor7}, construct a sequence $\{u_n\}\subseteq X$ such that $u_n\in S_{\lambda_n}$, $u_n\leq u_{n+1}$. Then
\begin{equation}\label{31}
\langle A_p(u_n)+A_q(u_n),v\rangle=\into u_n^{\tau-1}v\, dx+\lambda_n\into f(\cdot \,, u_n)v\, dx,\quad v\in X.
\end{equation}
We can also assume $\Phi_\lambda(u_n)<0$ (see the proof of Proposition \ref{prop5}), which means
\begin{equation}\label{30}
\Vert\nabla u_n\Vert_p^p+\frac{p}{q}\Vert \nabla u_n\Vert_q^q-\frac{p}{\tau}\Vert u_n\Vert_{\tau}^{\tau}-\lambda_n \into p F(x, u_n(x))\, dx< 0.
\end{equation}
Testing \eqref{31} with $v:= u_n$ gives
\begin{equation}\label{32}
\Vert\nabla u_n \Vert_p^p+\Vert\nabla u_n\Vert_q^q=\Vert u_n\Vert_{\tau}^{\tau}+\lambda_n\into f(\cdot \,,u_n) u_n\, dx.
\end{equation}
Since $q<p$ while $\lambda_1\leq\lambda_n$, from \eqref{30}--\eqref{32} it follows
\begin{equation}\label{33}
\into \left[f(\cdot \,,u_n) u_n- p F(\cdot \,,u_n)\right] dx\le\frac{1}{\lambda_1} \left(\frac{p}{\tau}-1\right)\Vert u_n \Vert_{\tau}^{\tau}
\quad\forall\, n\in\N.
\end{equation}
Observe next that, thanks to $({\rm h}_1)$ and $({\rm h}_3)$, one has
\begin{equation*}
f(x,\xi)\xi- p F(x,\xi)\ge c_8\xi^\beta-c_9\quad\text{in}\quad\Omega\times\R_+\, .
\end{equation*}
Consequently, \eqref{33} becomes 
$$c_8 \Vert u_n\Vert_{\beta}^{\beta}\le \frac{1}{\lambda_1}\left(\frac{p}{\tau}-1\right)\Vert u_n\Vert_\tau^\tau+c_{10}
\le c_{11}\Vert u_n\Vert_{\beta}^{\tau}+c_{10},\quad n\in\N,$$
because $\tau<\beta$. This clearly forces
\begin{equation}\label{35}
\Vert u_n\Vert_\beta\leq c_{12}\quad\forall\, n\in\N.
\end{equation}
If $r\leq\beta$ then $\{u_n\}$ turns out also bounded in $L^r(\Omega)$. Using \eqref{32} besides $({\rm h}_1)$ entails
\begin{equation}\label{38}
\begin{split}
\Vert u_n\Vert^p & \le\Vert\nabla u_n\Vert_p^p+\Vert\nabla u_n \Vert_q^q\le\Vert u_n \Vert_{\tau}^{\tau}
+\lambda^* \into f(\cdot \,, u_n) u_n\, dx \\
& \le\vert\Omega\vert^{1- \tau/r} \Vert u_n \Vert_r^{\tau}+ \lambda^* c_0 \into (u_n^{\theta}+ u_n^r)\, dx\\
& \le\vert\Omega\vert^{1- \tau/r} \Vert u_n \Vert_r^{\tau}+\lambda^* c_0 \into \left[(1+ u_n^r)+ u_n^r\right] dx,
\end{split}
\end{equation}
whence $\{u_n\}\subseteq X$ is bounded. Suppose now $\beta<r<p^*$. Two cases may occur.\\
\noindent 1) $p<N$. Let $t\in (0,1)$ satisfy
\begin{equation}\label{36}
\frac{1}{r}=\frac{1-t}{\beta}+\frac{t}{p^*}.
\end{equation}
The interpolation inequality \cite[p. 905]{GP1} yields $\Vert u_n\Vert_r\le\Vert u_n\Vert_{\beta}^{1-t}\Vert u_n \Vert_{p^*}^t$. Via \eqref{35} we thus obtain
\begin{equation}\label{37}
\Vert u_n \Vert_r^r \le c_{13}\Vert u_n\Vert_{p^*}^{tr},\quad n \in \N.
\end{equation}
Reasoning exactly as before and exploiting \eqref{37} produces
\begin{equation}\label{38}
\Vert u_n\Vert^p\leq\Vert\nabla u_n \Vert_p^p+\Vert\nabla u_n\Vert_q^q \le c_{14} \left(1+ \Vert u_n \Vert_{p^*}^{tr} \right)
\le c_{15}\left(1+\Vert u_n\Vert^{tr}\right).
\end{equation}
Finally, note that $tr<p$. Indeed, $ (r-p)\frac{N}{p}<\beta$ due to $({\rm h}_3)$, while
$$tr<p \;\iff\; \frac{r-\beta}{p^*-\beta}<\frac{p}{p^*}\;\iff\; (r-p)\frac{N}{p}<\beta;$$
cf. \eqref{36}. Now, the boundedness of $\{u_n\}\subseteq X$ directly stems from \eqref{38}.\\
2) $p\ge N$, which implies $p^*=+\infty$. We will repeat the previous argument with $p^*$ replaced by any $\sigma> r$. Accordingly, if $t\in (0,1)$ fulfills $\frac{1}{r}=\frac{1-t}{\beta}+ \frac{t}{\sigma}$ then $tr= \frac{\sigma(r-\beta)}{\sigma-\beta}$. Since, thanks to $({\rm h}_3)$ again,
$$\lim_{\sigma\to+\infty}\frac{\sigma (r-\beta)}{\sigma-\beta}= r-\beta< p,$$
one arrives at $tr< p$ for $\sigma$ large enough. This entails $\{u_n\}\subseteq X$ bounded once more.\\
Hence, in either case, we may assume
\begin{equation}\label{40}
u_n \rightharpoonup u^*\;\;\text{in}\;\; X\quad\text{and}\quad u_n \to u^* \;\;\text{in}\;\; L^r(\Omega),
\end{equation}
where a subsequence is considered when necessary. Testing \eqref{31} with $v:= u_n- u^*$ thus yields, as $n\to+\infty$,
$$\lim_{n\to+\infty}\langle A_p(u_n)+A_q(u_n), u_n- u^*\rangle= 0,$$
whence, by monotonicity of $A_q$, 
$$\limsup_{n\to+\infty}\left[\langle A_p(u_n), u_n- u^*\rangle+ \langle A_q(u), u_n- u^*\rangle \right]\le 0.$$
On account of \eqref{40} it follows
$$\limsup_{n\to+\infty}\langle A_p(u_n), u_n- u^*\rangle\le 0.$$
Recalling that $A_p$ enjoys the  $({\rm S})_+$-property, we infer $u_n\to u^* $ in $X$, besides $0\le u_n\le u^*$ for all $n\in\N$. Finally, let $n\to+\infty$ in \eqref{31} to get
$$\langle A_p(u^*)+A_q(u^*), v\rangle= \into (u^*)^{\tau-1}v\, dx+ \lambda^* \into f(\cdot \,, u^*)v\, dx\quad\forall\, v\in X,$$
i.e., $u^* \in S_{\lambda^*}$ and, a fortiori, $\lambda^*\in\mathcal L$. 
\end{proof}
Some meaningful (bifurcation) properties of the set $S_\lambda$ will now be established. 
\begin{proposition}\label{prop11}
Suppose $({\rm h}_1)$--$({\rm h}_4)$ hold true. Then, for every $\lambda\in(0,\lambda^*)$, problem \eqref{problem} admits two solutions $u_0,\hat u\in{\rm int}(C_+)$ such that $u_0\le\hat u$. Moreover, $ u_0 $ is a local minimizer of the associated energy functional $\Phi_{\lambda}$. 
\end{proposition}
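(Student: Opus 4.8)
The plan is to produce the first solution $u_0$ as a constrained minimizer, establish that it is in fact a local minimizer of the full energy $\Phi_\lambda$, and then extract a second solution $\hat u\geq u_0$ by a mountain-pass argument anchored at $u_0$.

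First I would exploit that $\mathcal L=(0,\lambda^*]$ (Propositions \ref{prop9}--\ref{prop10}): having fixed $\lambda\in(0,\lambda^*)$, I pick $\hat\lambda\in(\lambda,\lambda^*)\subseteq\mathcal L$ and some $u_{\hat\lambda}\in S_{\hat\lambda}$. Truncating the reaction $\xi\mapsto\xi^{\tau-1}+\lambda f(x,\xi)$ at $0$ from below and at $u_{\hat\lambda}(x)$ from above produces, exactly as in the proofs of Propositions \ref{prop5}--\ref{prop6}, a coercive and weakly sequentially lower semicontinuous $C^1$-functional. Its global minimizer $u_0$ lies in $[0,u_{\hat\lambda}]\setminus\{0\}$ (negative energy at a suitable test element follows from $({\rm h}_1)$ and $\tau<q<p$) and solves \eqref{problem} because the truncations are inactive on $[0,u_{\hat\lambda}]$; hence $u_0\in S_\lambda\subseteq{\rm int}(C_+)$. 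Treating $u_{\hat\lambda}$ as a strict supersolution, the computation of Proposition \ref{prop8} together with Proposition \ref{arcoya-ruiz} then upgrades this to $u_{\hat\lambda}-u_0\in{\rm int}(C_+)$; this is precisely where the strict inequality $\lambda<\hat\lambda$ is used.

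Next I would show that $u_0$ is a local minimizer of $\Phi_\lambda$. Since $u_0\in{\rm int}(C_+)$ and $u_{\hat\lambda}-u_0\in{\rm int}(C_+)$, the order interval $[0,u_{\hat\lambda}]$ contains an entire $C^1_0(\close)$-ball about $u_0$; on that ball the truncated functional coincides with $\Phi_\lambda$ (cf. \eqref{29'}), so $u_0$ is a local $C^1_0(\close)$-minimizer of $\Phi_\lambda$. Proposition \ref{prop2} promotes $u_0$ to a local $\Wpzero{p}$-minimizer of $\Phi_\lambda$, which is the final assertion. For the second solution I would instead truncate the reaction from below at $u_0$, i.e.\ replace $\xi$ by $\max\{u_0(x),\xi\}$, obtaining an energy $\hat\Phi_\lambda$ whose critical points all satisfy $u\geq u_0$ (test with $(u_0-u)^+$) and therefore solve \eqref{problem}. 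One checks that $u_0$ remains a local minimizer of $\hat\Phi_\lambda$ and that $\hat\Phi_\lambda(u_0+te)\to-\infty$ as $t\to+\infty$ for $e\in{\rm int}(C_+)$, thanks to the $(p-1)$-superlinearity $({\rm h}_2)$. If $u_0$ is not already surrounded by further minimizers of $\hat\Phi_\lambda$, the mountain-pass theorem yields $\hat u$ with $\hat\Phi_\lambda(\hat u)>\hat\Phi_\lambda(u_0)$, so $\hat u\neq u_0$; Lieberman's regularity and the Pucci--Serrin maximum principle place $\hat u\in{\rm int}(C_+)$, and $\hat u\geq u_0$ by construction.

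The main obstacle is the verification of the Cerami condition $({\rm C})$ for $\hat\Phi_\lambda$: because $f$ need not satisfy $({\rm AR})$, the energy cannot be controlled in the usual direct way. I would run the standard rescaling argument. Given a Cerami sequence $\{u_n\}$, suppose $\Vert u_n\Vert\to+\infty$, set $y_n:=u_n/\Vert u_n\Vert$, and pass to a weak limit $y$. On $\{y>0\}$ one has $u_n(x)\to+\infty$, and $({\rm h}_2)$ forces $\hat\Phi_\lambda(u_n)\to-\infty$, contradicting boundedness of $\{\hat\Phi_\lambda(u_n)\}$; if $y\equiv0$, I would maximize $t\mapsto\hat\Phi_\lambda(tu_n)$ over $[0,1]$ and invoke $({\rm h}_3)$, which bounds $f(x,\xi)\xi-pF(x,\xi)$ below by $c_3\xi^\beta$, to contradict $(1+\Vert u_n\Vert)\hat\Phi_\lambda'(u_n)\to0$ along the lines of \eqref{33}. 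This is the one genuinely delicate point; everything else is bookkeeping with the truncations, strict monotonicity of $A_p+A_q$, and the comparison machinery already in place.
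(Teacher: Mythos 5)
Your overall architecture coincides with the paper's: produce $u_0\in S_\lambda$ strictly ordered below some $u_\eta\in S_\eta$ with $\eta\in(\lambda,\lambda^*)$ via Propositions \ref{prop6}, \ref{prop8} and \ref{arcoya-ruiz}; deduce from $u_0\in{\rm int}_{C^1_0(\close)}([0,u_\eta])$ that $u_0$ is a local $C^1_0(\close)$- and hence, by Proposition \ref{prop2}, local $X$-minimizer of $\Phi_\lambda$; then truncate from below at $u_0$ and run a mountain pass. Two steps, however, are glossed over in a way that hides real work.

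First, ``one checks that $u_0$ remains a local minimizer of $\hat\Phi_\lambda$'' is not automatic. For $\xi<u_0(x)$ the lower-truncated primitive is affine in $\xi$ while the original one is not, so the lower-truncated functional (the paper's $\Phi_0$) does \emph{not} coincide with $\Phi_\lambda$, even up to an additive constant, on any full neighborhood of $u_0$; being a local minimizer of $\Phi_\lambda$ therefore does not transfer directly. The paper handles this by a \emph{second} truncation of $f_0$ at $u_\eta$: under the dichotomy \eqref{45} (otherwise a second solution already exists), the doubly truncated functional $\tilde\Phi$ has $K(\tilde\Phi)=\{u_0\}$, is coercive and weakly sequentially lower semicontinuous, so its global minimizer must be $u_0$; since $\tilde\Phi$ and $\Phi_0$ agree on $[0,u_\eta]$ and \eqref{42} holds, $u_0$ is a local $C^1_0(\close)$-minimizer of $\Phi_0$. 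You need some version of this detour. Second, your Cerami verification by rescaling is the wrong tool here: in the branch $y\equiv 0$, maximizing $t\mapsto\Phi_0(tu_n)$ over $[0,1]$ requires comparing $f(x,tu_n)tu_n-pF(x,tu_n)$ with $f(x,u_n)u_n-pF(x,u_n)$, i.e.\ a quasi-monotonicity of $\xi\mapsto f(x,\xi)\xi-pF(x,\xi)$ that is not among $({\rm h}_1)$--$({\rm h}_4)$. The hypothesis $({\rm h}_3)$ is designed precisely so that the \emph{direct} estimate works: from boundedness of $\{\Phi_0(u_n)\}$ and $(1+\Vert u_n\Vert)\Phi_0'(u_n)\to 0$ one gets a bound as in \eqref{33}, hence $\Vert u_n\Vert_\beta\le c$, and then the interpolation argument of Proposition \ref{prop10} (using $(r-p)\max\{Np^{-1},1\}<\beta$) bounds $\{u_n\}$ in $X$. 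You do gesture at this, but the rescaling scaffolding around it should be dropped rather than repaired. With these two points fixed, the rest of your proposal (the inclusion $K(\Phi_0)\subseteq[u_0)\cap{\rm int}(C_+)$ by testing with $(u_0-u)^+$, the strict mountain-pass geometry via finiteness of $K(\Phi_0)$, and $\Phi_0(tu)\to-\infty$ from $({\rm h}_2)$) matches the paper.
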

\begin{proof}
Fix $\lambda\in(0,\lambda^*)$ and choose $\eta\in(\lambda,\lambda^*)$. By Proposition \ref{prop6}, there exists $u_\eta\in S_\eta$ while Proposition \ref{prop8} provides $u_0 \in S_{\lambda}$ satisfying
\begin{equation}\label{42}
u_0\in\text{int}_{C^1_0(\close)}([0, u_\eta]).
\end{equation}
The same reasoning adopted in the proof of Proposition \ref{prop6} ensures here that $u_0$ is a global minimum point to the functional
$$\Phi_{\lambda,\eta}(u):=\frac{1}{p}\Vert\nabla u\Vert_p^p+\frac{1}{q}\Vert\nabla u\Vert_q^q- \into F_{\lambda,\eta}(x,u(x))\, dx, \quad u\in X,$$
where $F_{\lambda,\eta}(x,\xi):=\int_0^\xi f_{\lambda,\eta} (x,t)\, dt$, with
\begin{equation*}
f_{\lambda,\eta}(x,\xi):=
\begin{cases}
(\xi^+)^{\tau-1}+\lambda f(x,\xi^+) & \text{if }\xi\le u_\eta(x), \\
u_\eta (x)^{\tau-1}+\lambda f(x,u_\eta (x)) & \text{otherwise.}
\end{cases}
\end{equation*}
By \eqref{42}, $u_0$ turns out a local $C^1_0(\close)$-minimizer of $\Phi_\lambda$, because $\Phi_\lambda\lfloor_{[0,u_\eta]}=\Phi_{\lambda,\eta}\lfloor_{[0,u_\eta]}$. Via Proposition \ref{prop2} we then see that this remains valid with $C^1_0(\close)$ replaced by $X$. Set
\begin{equation}\label{43}
f_0(x,\xi):=
\begin{cases}
u_0(x)^{\tau-1}+\lambda f(x, u_0(x)) & \text{if } \xi\le u_0(x), \\
\xi^{\tau-1}+\lambda f(x,\xi) & \text{otherwise,}
\end{cases}
\, \, F_0(x,\xi):=\int_0^\xi f_0(x,t)\, dt,
\end{equation}
$(x,\xi)\in\Omega\times\R$, as well as
\begin{equation}\label{43'}
\Phi_0(u):=\frac{1}{p}\Vert\nabla u\Vert_p^p+\frac{1}{q}\Vert\nabla u\Vert_q^q- \into F_0(x, u(x))\, dx \quad\forall\, u\in X.
\end{equation}
From \eqref{43} and the nonlinear regularity theory it follows $u_0\in K(\Phi_0)\subseteq [u_0)\cap{\rm int}(C_+)$. We may thus assume
\begin{equation}\label{45}
K(\Phi_0)\cap [u_0, u_{\eta}]= \{u_0\},
\end{equation}
or else a second solution of \eqref{problem} bigger than $u_0$ would exist. Bearing in mind the proof of Proposition \ref{prop10} and making small changes to accommodate the truncation at $u_0(x)$ shows that $\Phi_0$ satisfies condition (C). Let us next truncate
$f_0(x,\s)$ at $u_{\eta}(x)$ to construct a new Carath\'{e}odory function $\tilde f$, with primitive $\tilde F$ and associated functional $\tilde\Phi$, defined like in \eqref{43'} but replacing $F_0$ by $\tilde F$. Evidently,
$$K(\tilde\Phi)= K(\Phi_0)\cap [u_0,u_\eta],$$
whence $K(\tilde\Phi)=\{u_0\}$ because of \eqref{45}. Since $\tilde\Phi$ is coercive and weakly sequentially lower semicontinuous, it possesses a global minimum point that must coincide with $u_0$. An easy verification gives $\Phi_0\lfloor_{[0,u_\eta]}=\tilde\Phi\lfloor_{[0, u_\eta]}$. So, thanks to \eqref{42}, $u_0$ turns out a local $C_0^1(\close)$-minimizer of $\Phi_0$. This still holds when $X$ replaces $C^1_0(\close)$; cf. Proposition \ref{prop2}. We may suppose $K(\Phi_0)$ finite, otherwise infinitely many solutions of \eqref{problem} bigger than $u_0$ do exist. Adapting the argument exploited in \cite[Proposition 29]{APS} provides $\rho\in(0,1)$ such that 
\begin{equation}\label{50}
\Phi_0(u_0)< m_0:=\inf\{\Phi_0(u):\Vert u- u_0\Vert=\rho\}. 
\end{equation}
Finally, if $u\in{\rm int}(C_+)$ then simple calculations based on $({\rm h}_2)$ entail $\Phi_0(tu)\to-\infty$ as $t\to+\infty$. Therefore, the mountain pass theorem can be applied, and there is $\hat u\in X$ fulfilling
\begin{equation}\label{52}
\hat u\in K(\Phi_0),\quad\Phi_0(\hat u)\geq m_0.
\end{equation}
Via \eqref{50}--\eqref{52} one has $u_0\ne\hat u$ while the inclusion $K(\Phi_0)\subseteq [u_0)\cap{\rm int}(C_+)$  forces $u_0\leq\hat u$, which ends the proof.
\end{proof}
\begin{proposition}\label{prop12}
Under $({\rm h}_1)$--$({\rm h}_4)$, the solution set $S_\lambda$ admits  a smallest element $\bar u_\lambda$ for every $\lambda\in\mathcal L$. 
\end{proposition}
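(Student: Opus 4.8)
The plan is to realize $\bar u_\lambda$ as the infimum of the solution set $S_\lambda$, which calls for three ingredients: that $S_\lambda$ is downward directed, that it possesses a uniform strictly positive lower bound, and that an appropriate minimizing sequence converges to an element of $S_\lambda$. Throughout I fix $\lambda\in\mathcal L$.

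First I would show that $S_\lambda$ is downward directed, i.e. given $u_1,u_2\in S_\lambda$ there exists $u\in S_\lambda$ with $u\le\min\{u_1,u_2\}$. Writing $m:=\min\{u_1,u_2\}\in{\rm int}(C_+)$, I truncate the reaction above at $m$, replacing $f$ by the Carath\'eodory function equal to $(\xi^+)^{\tau-1}+\lambda f(x,\xi^+)$ for $\xi\le m(x)$ and frozen at $m(x)^{\tau-1}+\lambda f(x,m(x))$ for $\xi>m(x)$, exactly as $\bar f_\lambda$ was built in the proof of Proposition \ref{prop5}. The associated functional is coercive and weakly sequentially lower semicontinuous, hence attains its infimum at some $u$; testing the Euler equation with $-u^-$ and then with $(u-m)^+$ (using that $u_1,u_2$ solve \eqref{problem} and that $f(x,\cdot)\ge0$, as in Proposition \ref{prop5}) yields $0\le u\le m$, while $({\rm h}_1)$ forces $u\ne0$. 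Thus $u\in S_\lambda$ with $u\le\min\{u_1,u_2\}$.

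Next comes the uniform lower bound, which I expect to be the main obstacle. Consider the auxiliary concave problem
$$-\Delta_p w-\Delta_q w=w^{\tau-1}\quad\text{in }\Omega,\qquad w>0\ \text{in }\Omega,\qquad w=0\ \text{on }\partial\Omega.$$
Since $\tau<q<p$, the energy $w\mapsto\frac1p\Vert\nabla w\Vert_p^p+\frac1q\Vert\nabla w\Vert_q^q-\frac1\tau\Vert w^+\Vert_\tau^\tau$ is coercive and weakly sequentially lower semicontinuous, so this problem admits a solution $\tilde u\in{\rm int}(C_+)$; the sublinearity $\tau<q<p$ also yields its \emph{uniqueness}. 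For every $u\in S_\lambda$, assumption $({\rm h}_1)$ together with $f\ge0$ makes $u$ a supersolution of the auxiliary problem, because $-\Delta_p u-\Delta_q u=u^{\tau-1}+\lambda f(x,u)\ge u^{\tau-1}$. Truncating the auxiliary energy above at $u$ and minimizing produces a solution lying in $[0,u]$; by uniqueness this minimizer coincides with $\tilde u$, whence $\tilde u\le u$. As $\tilde u$ does not depend on $u$, it is a uniform lower bound: $\tilde u\le u$ for all $u\in S_\lambda$.

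Finally I would extract the smallest solution. Because $S_\lambda$ is downward directed, a standard result on ordered Banach spaces provides a decreasing sequence $\{u_n\}\subseteq S_\lambda$ with $\inf_n u_n=\inf S_\lambda$. From $0\le u_n\le u_1$ and $({\rm h}_1)$, testing the weak formulation of \eqref{problem} satisfied by $u_n$ with $v:=u_n$ shows that $\{u_n\}$ is bounded in $X$; here, unlike in Proposition \ref{prop10}, boundedness is immediate since the sequence is pointwise dominated by $u_1\in{\rm int}(C_+)$. Passing to a subsequence, $u_n\rightharpoonup\bar u_\lambda$ in $X$ and $u_n\to\bar u_\lambda$ in $L^r(\Omega)$; testing with $u_n-\bar u_\lambda$, invoking the monotonicity of $A_q$ and the $({\rm S})_+$-property of $A_p$, gives $u_n\to\bar u_\lambda$ strongly in $X$. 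Letting $n\to+\infty$ in the weak formulation shows that $\bar u_\lambda$ solves \eqref{problem}, and $\bar u_\lambda\ge\tilde u>0$ guarantees $\bar u_\lambda\ne0$, so $\bar u_\lambda\in S_\lambda\subseteq{\rm int}(C_+)$. Since $\bar u_\lambda=\inf_n u_n=\inf S_\lambda$, it is the smallest element of $S_\lambda$. The delicate points are the uniqueness for the auxiliary problem, needed to make the lower bound independent of $u$, and the downward directedness; the remaining convergence steps are routine adaptations of arguments already used in Propositions \ref{prop5} and \ref{prop10}.
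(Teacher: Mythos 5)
Your proposal is correct and follows essentially the same route as the paper: downward directedness of $S_\lambda$ (which the paper delegates to a standard procedure from the literature), a decreasing minimizing sequence converging in $X$ via the $({\rm S})_+$-property, and the uniform lower bound $\tilde u\le u$ coming from the unique positive solution of the auxiliary purely concave problem to guarantee $\bar u_\lambda\neq 0$. The only differences are cosmetic (you establish the lower bound before the limit passage, the paper after, and you sketch the directedness argument that the paper merely cites).
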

\begin{proof}
A standard procedure ensures that $ S_{\lambda} $ turns out downward directed; see, e.g., \cite[Section 4]{FP}. Lemma 3.10 at p. 178 of \cite{HP} yields 
\begin{equation}\label{hupap}
\essinf S_{\lambda}= \inf\{u_n:n\in\N\}
\end{equation}
for some decreasing sequence $\{u_n\}\subseteq S_{\lambda}$. Consequently, $0\le u_n\le u_1$ and
\begin{equation}\label{53}
\langle A_p(u_n)+A_q(u_n),v\rangle=\into\left[ u_n^{\tau-1}+\lambda f(\cdot \,,u_n)\right]v\, dx\quad\forall\, v\in X.
\end{equation}
Due to $({\rm h}_1)$, testing \eqref{53} with $v:=u_n$ we thus obtain 
\begin{equation*}
\begin{split}
\Vert u_n\Vert^p & \le\Vert\nabla u_n \Vert_p^p +\Vert \nabla u_n\Vert_q^q =\into\left[ u_n^{\tau}+\lambda f(\cdot \,, u_n) u_n \right] dx\\
& \le\into \left[u_n^{\tau}+ \lambda c_0 \left(u_n^{\theta}+ u_n^r\right) \right] dx 
\le\into \left[u_1^{\tau}+ \lambda c_0 \left(u_1^{\theta}+ u_1^r \right) \right] dx ,\quad n\in\N,
\end{split}
\end{equation*}
namely $\{u_n\}\subseteq X$ is bounded. Like before (cf. the proof of Proposition \ref{prop10}), this gives $u_n\to\bar u_\lambda$ in $X$, where a subsequence is considered if necessary. So, from \eqref{53} it easily follows
$$\langle A_p(\bar u_{\lambda})+A_q(\bar u_{\lambda}), v\rangle= \into\left[\bar u_{\lambda}^{\tau-1}+\lambda f(\cdot \,,\bar u_{\lambda}) \right] v\, dx\quad\forall\, v\in X.$$
Showing that $\bar u_{\lambda}\ne 0$ will entail $\bar u_{\lambda}\in S_{\lambda}$, whence the conclusion by \eqref{hupap}. To the aim, consider the problem
\begin{equation}\label{55}
-\Delta_p u-\Delta_q u= u^{\tau-1}\;\;\text{in}\;\;\Omega,\quad u> 0\;\;\text{in}\;\;\Omega,\quad u= 0\;\;\text{on}\;\;\rand.
\end{equation}
Its energy functional
$$\Phi_0(u):=\frac{1}{p}\Vert\nabla u\Vert_p^p+\frac{1}{q} \Vert\nabla u\Vert_q^q-\frac{1}{\tau}\Vert u^+\Vert_{\tau}^{\tau}, \quad u\in X,$$
turns out coercive and weakly sequentially lower semicontinuous. Hence, there exists $\tilde u\in X$ satisfying $\Phi_0(\tilde u)=\inf_X\Phi_0$. One has $u_0\neq 0$, because $\Phi_0(\tilde u)< 0=\Phi_0(0)$ (the argument is like in the proof of Proposition \ref{prop9}). Further, $\Phi_0'(\tilde u)=0$, i.e.,
$$\langle A_p(\tilde u)+A_q(\tilde u),v\rangle=\into(\tilde u^+)^{\tau-1}v\, dx\quad\forall\, v\in X.$$
Choosing $v:=-\tilde u^-$ we see that $u$ is a positive solution to \eqref{55}. Actually, $\tilde u\in{\rm int}(C_+)$ and, through a standard procedure \cite[Lemma 3.1]{GuMaPa}, $\tilde u$ turns out unique.\\
\textbf{Claim:} $\tilde u\le u$ for all $u\in S_{\lambda}$.\\
Indeed, fixed any $u\in S_{\lambda}$, define 
$$\Psi(w):=\frac{1}{p}\Vert\nabla u\Vert_p^p+\frac{1}{q}\Vert\nabla u\Vert_q^q- \into dx\int_0^{w(x)} g(x,t)\, dt ,\quad w\in X,$$
where
\begin{equation*}
g(x,t):=
\begin{cases}
(t^+)^{\tau-1} & \text{if } t\le u(x),\\
u(x)^{\tau-1} & \text{otherwise} 
\end{cases}
\quad\forall\, (x,t)\in\Omega\times\R.
\end{equation*}
The following assertions can be easily verified.
\begin{itemize}
\item $\Psi(u^*)=\inf_X\Psi$, with appropriate $u^*\in X$.
\item $\Psi(u^*)<0=\Psi(0)$, whence $u^*\ne 0$.
\item $u^*\in K(\Psi)\subseteq [0,u]\cap C_+$.
\end{itemize}
Therefore, $u^*$ is a positive solution of \eqref{55}. By uniqueness, this implies $u^*=\tilde u$. Thus, a fortiori, $\tilde u\le u$.\\
The claim brings $\tilde u\le u_n$, $n\in\N$, which in turn provides  $0<\tilde u\le \bar u_{\lambda}$, as desired.
\end{proof}
Let us finally come to some meaningful properties of the map
$$k:\lambda\in\mathcal L\mapsto \bar u_\lambda\in C^1_0(\close).$$ 
\begin{proposition}\label{prop13}
Suppose $({\rm h}_1)$--$({\rm h}_4)$ hold true. Then the function $k$ is both
\begin{itemize}
\item[$({\rm i}_1)$] strictly increasing, namely $\bar u_{\lambda_2}-\bar u_{\lambda_1}\in{\rm int}(C_+)$ if $\lambda_1<\lambda_2$, and
\item[$({\rm i}_2)$] left-continuous.
\end{itemize}
\end{proposition}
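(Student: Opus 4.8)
The plan is to derive both properties from results already in hand: for monotonicity, Proposition \ref{prop8} together with the minimality of $\bar u_\lambda$ from Proposition \ref{prop12}; for continuity, the uniform a priori bounds coming from $({\rm h}_1)$ and Lieberman's regularity theory.

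For $({\rm i}_1)$ I would fix $\lambda_1<\lambda_2$ in $\mathcal L$ and apply Proposition \ref{prop8} with $\hat\lambda:=\lambda_2$ and $u_{\hat\lambda}:=\bar u_{\lambda_2}$, obtaining some $v\in S_{\lambda_1}$ with $\bar u_{\lambda_2}-v\in{\rm int}(C_+)$. Since $\bar u_{\lambda_1}$ is the smallest element of $S_{\lambda_1}$ by Proposition \ref{prop12}, one has $\bar u_{\lambda_1}\le v$, hence $v-\bar u_{\lambda_1}\in C_+$. Writing $\bar u_{\lambda_2}-\bar u_{\lambda_1}=(\bar u_{\lambda_2}-v)+(v-\bar u_{\lambda_1})$ and recalling that $C_+$ is a convex cone, so that ${\rm int}(C_+)+C_+\subseteq{\rm int}(C_+)$, yields $\bar u_{\lambda_2}-\bar u_{\lambda_1}\in{\rm int}(C_+)$ at once.

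For $({\rm i}_2)$, given $\lambda\in\mathcal L$ I would pick any $\lambda_n\uparrow\lambda$ with $\lambda_n\in\mathcal L$. By $({\rm i}_1)$ the sequence $\{\bar u_{\lambda_n}\}$ is increasing with $0\le\bar u_{\lambda_n}\le\bar u_\lambda$, and since $\bar u_\lambda\in{\rm int}(C_+)\subseteq L^{\infty}(\Omega)$, assumption $({\rm h}_1)$ bounds the right-hand sides $\bar u_{\lambda_n}^{\tau-1}+\lambda_n f(\cdot,\bar u_{\lambda_n})$ in $L^{\infty}(\Omega)$ uniformly in $n$. Testing the equation for $\bar u_{\lambda_n}$ with $\bar u_{\lambda_n}$ then gives $\Vert\bar u_{\lambda_n}\Vert^p\le c$, so $\{\bar u_{\lambda_n}\}$ is bounded in $X$. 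Arguing exactly as in the proof of Proposition \ref{prop10} (via the $({\rm S})_+$-property of $A_p$ and the monotonicity of $A_q$), along a subsequence $\bar u_{\lambda_n}\to u^*$ in $X$, and passing to the limit in the weak formulation shows that $u^*$ solves \eqref{problem} with parameter $\lambda$.

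It then remains to identify $u^*$ with $\bar u_\lambda$. First I would check $u^*\ne 0$: denoting by $\tilde u\in{\rm int}(C_+)$ the unique positive solution of \eqref{55} supplied by Proposition \ref{prop12}, the Claim proved there gives $\tilde u\le\bar u_{\lambda_n}$ for every $n$, whence $\tilde u\le u^*$ and thus $u^*>0$. Consequently $u^*\in S_\lambda$, so minimality forces $\bar u_\lambda\le u^*$; on the other hand, letting $n\to+\infty$ in $\bar u_{\lambda_n}\le\bar u_\lambda$ gives $u^*\le\bar u_\lambda$, and therefore $u^*=\bar u_\lambda$. Finally, to upgrade the convergence to $C^1_0(\close)$, the uniform $L^{\infty}$-bound on the right-hand sides lets Lieberman's regularity theory bound $\{\bar u_{\lambda_n}\}$ in $C^{1,\alpha}(\close)$ for some $\alpha\in(0,1)$; the compact embedding $C^{1,\alpha}(\close)\hookrightarrow C^1_0(\close)$ yields $C^1_0(\close)$-convergence along a subsequence, and since the limit $\bar u_\lambda$ is independent of the subsequence, the whole sequence converges. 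The hard part will be precisely this nondegeneracy of the limit: the uniform bounds only guarantee a nonnegative limit, and strict positivity must be recovered from the lower barrier $\tilde u$, namely the unique solution of the purely concave problem \eqref{55}, which lies below every positive solution uniformly in the parameter; the remaining estimates and the regularity upgrade are routine once this barrier is in place.
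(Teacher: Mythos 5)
Your argument is correct and follows essentially the same route as the paper: $({\rm i}_1)$ is obtained exactly as there from Proposition \ref{prop8} combined with the minimality of $\bar u_{\lambda_1}$ from Proposition \ref{prop12}, and $({\rm i}_2)$ from the monotone, $X$-bounded sequence $\{\bar u_{\lambda_n}\}$, the $({\rm S})_+$-argument of Proposition \ref{prop10}, and the Lieberman plus compact-embedding upgrade to $C^1_0(\close)$-convergence. The only (harmless, and in fact cleaner) deviation is the identification of the limit: you sandwich it via $\bar u_\lambda\le u^*$ (minimality, once $u^*\in S_\lambda$) and $u^*\le\bar u_\lambda$ (pointwise limit of $\bar u_{\lambda_n}\le\bar u_\lambda$), whereas the paper argues by contradiction at a point $x_0$ using the $C^1_0(\close)$-convergence; note also that nondegeneracy of the limit already follows from monotonicity, since $u^*\ge\bar u_{\lambda_1}>0$, so the barrier $\tilde u$ is not actually needed at that step.
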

\begin{proof}
Pick $\lambda_1,\lambda_2\in \mathcal L$ such that $\lambda_1<\lambda_2$. Since $\bar u_{\lambda_2}\in S_{\lambda_2}$,  Proposition \ref{prop8} yields $u_{\lambda_1}\in S_{\lambda_1}$ fulfilling $\bar u_{\lambda_2}-u_{\lambda_1}\in{\rm int}(C_+)$, while Proposition \ref{prop12} entails $\bar u_{\lambda_1}\leq u_{\lambda_1}$. Hence, $\bar u_{\lambda_2}-\bar u_{\lambda_1}\in{\rm int}(C_+)$. This shows $({\rm i}_1)$.

If $\lambda_n\to\lambda^-$ in $\mathcal L$ then, by $({\rm i}_1)$, the  sequence $\{\bar u_{\lambda_n}\}$ turns out increasing. Its boundedness in $X$ immediately stems from $({\rm h}_1)$; see the previous proof. Now, repeat the argument below \eqref{40} to arrive at
\begin{equation}\label{59}
\bar u_{\lambda_n}\to\tilde u_{\lambda}\;\;\text{in}\;\; X,
\end{equation}
whence $\tilde u_{\lambda}\in S_{\lambda}\subseteq{\rm int}(C_+)$. We finally claim that $\tilde u_{\lambda}=\bar u_{\lambda}$. Assume on the contrary
\begin{equation}\label{60}
\bar u_{\lambda}(x_0)<\tilde u_{\lambda}(x_0)\;\;\text{for some}\;\; x_0\in\Omega.
\end{equation}
Lieberman's nonlinear regularity theory gives $\{\bar u_n\}\subseteq C^{1,\alpha}_0(\close)$ as well as 
$$\Vert\bar u_{\lambda_n} \Vert_{C^{1, \alpha}_0(\close)}\le c_{16}\quad\forall\, n\in\N.$$
Since the embedding $C^{1,\alpha}_0(\close)\hookrightarrow C^1_0(\close)$ is compact, \eqref{59} becomes
$$\bar u_{\lambda_n}\to\tilde u_{\lambda}\;\;\text{in}\;\; C^1_0(\close).$$
Because of \eqref{60}, this implies $\bar u_{\lambda}(x_0)<\bar u_{\lambda_n}(x_0)$ for any $n$ large enough, against $({\rm i}_1)$. Consequently, $\tilde u_{\lambda}=\bar u_{\lambda}$, and $({\rm i}_2)$ follows from \eqref{59}.
\end{proof}
Gathering Propositions \ref{prop5}--\ref{prop13} together we obtain the following
\begin{theorem}
Let $({\rm h}_1)$--$({\rm h}_4)$ be satisfied. Then, there exists $\lambda^*>0 $ such that problem \eqref{problem} admits
\begin{itemize}
\item[$({\rm j}_1)$] at least two solutions $u_0,\hat u\in {\rm int}(C_+)$, with $u_0\le\hat u$, for every $\lambda\in(0,\lambda^*)$,
\item[$({\rm j}_2)$] at least one solution $u^* \in{\rm int}(C_+)$ when $\lambda=\lambda^*$,
\item[$({\rm j}_3)$] no positive solutions for all $\lambda>\lambda^*$,
\item[$({\rm j}_4)$] a smallest positive solution $\bar u_{\lambda}\in{\rm int}(C_+)$ provided $\lambda\in (0, \lambda^*]$.
\end{itemize}
Moreover, the map $\lambda\in(0,\lambda^*]\mapsto\bar u_{\lambda}\in C_0^1(\close)$ is strictly increasing and left-continuous.
\end{theorem}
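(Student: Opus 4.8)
The plan is to set $\lambda^*:=\sup\mathcal{L}$ and to verify that this single number witnesses all four assertions, after which the monotonicity and continuity of $k$ are immediate. First I would establish that $\lambda^*$ is a well-defined positive real number: Proposition \ref{prop5} gives $\mathcal{L}\neq\emptyset$, so that $\lambda^*>0$, while Proposition \ref{prop9} yields $\lambda^*<+\infty$. Next, combining the interval property of Proposition \ref{prop6} with the membership $\lambda^*\in\mathcal{L}$ proved in Proposition \ref{prop10}, I would conclude $\mathcal{L}=(0,\lambda^*]$. This identification is the backbone of the whole statement, since each item then amounts to reading off a structural fact about $\mathcal{L}$ or about the solution sets $S_\lambda$.

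With $\mathcal{L}=(0,\lambda^*]$ in hand, the four items follow by direct invocation. Assertion $({\rm j}_1)$ is exactly Proposition \ref{prop11}, which produces two ordered solutions $u_0\le\hat u$ in ${\rm int}(C_+)$ for each $\lambda\in(0,\lambda^*)$. Assertion $({\rm j}_2)$ is the content of $\lambda^*\in\mathcal{L}$: Proposition \ref{prop10} furnishes $u^*\in S_{\lambda^*}$, and the inclusion $S_{\lambda^*}\subseteq{\rm int}(C_+)$ recalled at the beginning of this section places $u^*$ in the interior of the positive cone. Assertion $({\rm j}_3)$ is merely the statement that $\lambda^*=\sup\mathcal{L}$ together with $\mathcal{L}=(0,\lambda^*]$, so that $S_\lambda=\emptyset$ whenever $\lambda>\lambda^*$. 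Finally, $({\rm j}_4)$ is Proposition \ref{prop12}, which grants a smallest element $\bar u_\lambda\in{\rm int}(C_+)$ of $S_\lambda$ for every $\lambda\in\mathcal{L}=(0,\lambda^*]$.

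It then remains to address the map $\lambda\mapsto\bar u_\lambda$. Both its strict monotonicity, in the strong sense $\bar u_{\lambda_2}-\bar u_{\lambda_1}\in{\rm int}(C_+)$ for $\lambda_1<\lambda_2$, and its left-continuity are precisely the two parts $({\rm i}_1)$ and $({\rm i}_2)$ of Proposition \ref{prop13}, so nothing further is required. I do not expect any genuine obstacle in this final assembly: all the analytical difficulty, namely the mountain-pass and direct-method arguments, the a priori bounds of Proposition \ref{prop10} resting on hypothesis $({\rm h}_3)$, and the strong comparison via Proposition \ref{arcoya-ruiz}, has already been absorbed into Propositions \ref{prop5}--\ref{prop13}. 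The only point deserving a word of care is the clean identification $\mathcal{L}=(0,\lambda^*]$, since it is exactly what simultaneously guarantees solvability up to and including $\lambda^*$ and non-solvability strictly beyond it; once that is fixed, the theorem is a transcription of the preceding results.
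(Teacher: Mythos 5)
Your proposal is correct and matches the paper exactly: the paper's ``proof'' of this theorem is literally the single line ``Gathering Propositions \ref{prop5}--\ref{prop13} together we obtain the following,'' and your assembly---setting $\lambda^*:=\sup\mathcal{L}$, identifying $\mathcal{L}=(0,\lambda^*]$ via Propositions \ref{prop5}, \ref{prop6}, \ref{prop9}, \ref{prop10}, and then reading off $({\rm j}_1)$--$({\rm j}_4)$ and the properties of $k$ from Propositions \ref{prop11}, \ref{prop12}, \ref{prop13}---is precisely the intended argument.
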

\vskip3pt
\noindent{\sc Acknowledgment.} This work is performed within the 2016--2018 Research Plan - Intervention Line 2: `Variational Methods and Differential Equations', and partially supported by GNAMPA of INDAM.
\end{document}